\DeclareMathAlphabet{\mathpzc}{OT1}{pzc}{m}{it}
\newtheorem{lemma}{Lemma}[section]
\newtheorem{proposition}{Proposition}[section]
\newtheorem{definition}{Definition}[section]
\theoremstyle{remark}
\newtheorem{remark}{Remark}[section]
\def\R{\mathbb{R}}
\def\N{\mathbb{N}}
\def\P{\mathbf{P}}
\def\E{\mathbf{E}}
\def\LL{\mathcal{L}}
\def\SS{\mathcal{S}}
\def\GG{\mathcal{G}}
\def\FF{\mathcal{F}}
\def\HH{\mathcal{H}}
\def\DD{\mathcal{D}}
\def\RR{\mathcal{R}}
\def\(({\left(}
\def\)){\right)}
\def\forall{\textrm{ for all }}
\def\for{\textrm{ for }}
\renewcommand{\phi}{\varphi}
\renewcommand{\epsilon}{\varepsilon}
\newcommand{\1}{{\text{\Large $\mathfrak 1$}}}
\newcommand{\beqr}{\begin{eqnarray*}}
\newcommand{\eeqr}{\end{eqnarray*}}
\newcommand{\beq}{\begin{equation}}
\newcommand{\eeq}{\end{equation}}
\newcommand{\bal}{\begin{align*}}
\newcommand{\enal}{\end{align*}}
\renewcommand{\limsup}{\varlimsup}
\renewcommand{\liminf}{\varliminf}
\newcommand{\oo}[1]{\overline{#1}}
\newcommand{\defn}{:=}
\title{On Sums of Conditionally Independent Subexponential Random Variables}
\author{Serguei Foss$^{1}$ and Andrew Richards$^{1}$}
\date{}
\begin{document}

\maketitle
\stepcounter{footnote}\footnotetext{School of Mathematics and Computer Sciences and the Maxwell Institute for Mathematical Sciences, 
  Heriot-Watt University, Edinburgh, EH14 4AS, Scotland, UK.  E-mail: S.Foss@hw.ac.uk and awr2@hw.ac.uk}
  
\begin{quotation}\small

The asymptotic tail behaviour of sums of independent subexponential random variables is well understood, one of the main characteristics being \textit{the principle of the single big jump}.  We study the case of dependent subexponential random variables, for both deterministic and random sums, using a fresh approach, by considering conditional independence structures on the random variables. We seek sufficient conditions for the results of the theory with independent random variables still to hold.  For a subexponential distribution, we introduce the concept of a boundary class of functions, which we hope will be a useful tool in studying many aspects of subexponential random variables.  The examples we give in the paper demonstrate a variety of effects owing to the dependence, and are also interesting in their own right.

\end{quotation}

\bfseries{Keywords:} \mdseries heavy tails; subexponential distribution; principle of single big jump;
conditional independence; kesten lemma; boundary class 

\section{Introduction}

Finding the asymptotic tail behaviour of sums of heavy-tailed random variables is an important problem in finance, insurance and many other disciplines.  The case when the random variables are independent and subexponentially distributed has been extensively studied and is well-understood.  The key idea is that such a sum will exceed a high threshold because of a single, very large jump; following other authors we shall refer to this as \textit{the principle of the single big jump}.  However, for many practical purposes the independence assumption is too restrictive.  In recent years, many authors have developed results in this area (see, for example, [1,2,4,6,9-13] and references therein).  Denuit, Genest and Marceau \cite{Den} constructed bounds for these sums, but did not consider asymptotics.  Goovaerts, Kaas, Tang and Vernic \cite{Goov} considered the situation of dependent random variables with regularly varying tails; there have also been results on negative dependence for various classes of subexponential distributions (see, for example \cite{Tang1}) and for dependence structures that are \textquoteleft not too positive\textquoteright (see \cite{KoT}).

Once we drop the requirement of independence, two questions naturally  arise.  First, \textit{what kind of behaviours can occur as the dependence between the random variables strengthens?}  And secondly, \textit{how far beyond the independent case does the principle of the single big jump still hold?}  These questions are of real interest, both from theoretical and practical viewpoints.

 Albrecher, Asmussen and Kortschak \cite{AAK} consider the first question for the sum of two dependent random variables.  Their approach, as for many authors, is to study the possible effects of the dependence by considering the copula structure.  They demonstrate that many possible behaviours naturally occur, and that, in some specific cases the principle of the single big jump is insensitive to the strength of the copula structure.  Other papers that concentrate on the copula structure include \cite{ALW, KA}.  Mitra and Resnick \cite{MR} investigate random variables belonging to the maximum domain of attraction of the Gumbel distribution and which are asymptotically independent.  The results we present contain overlap with all these approaches, but we neither impose a particular dependence structure, nor a particular distribution for the random variables, beyond the necessary constraint that at least one be subexponential.

We wish to consider the second question, and to establish conditions on the strength of the dependence which will preserve the results of the theory established for independent random variables; in particular, the principle of the single big jump.  This principle is well known.  However, we would like to examine it again from a probabilistic point of view by considering the sum of two identically distributed subexponential random variables $X_1,X_2$.
\begin{align}
& \P(X_1+X_2>x)  = \P(X_1 \vee X_2 > x) + \P(X_1 \vee X_2 \leq x, X_1+X_2>x)\nonumber\\
& = \P(X_1>x)+\P(X_2>x) - \P(X_1 \wedge X_2>x) + \P(X_1 \vee X_2 \leq x, X_1+X_2>x)\nonumber\\
& \defn \P(X_1>x)+\P(X_2>x) - P_2(x) +P_1(x),
\end{align}
where $X_1 \vee X_2 = \max(X_1,X_2)$ and $X_1 \wedge X_2 = \min(X_1,X_2)$.
If $P_1(x)$ is negligible compared to $\P(X_1>x)$, which in the independent case follows from the definition of subexponentiality, we shall say that we have the \textit{principle of the big jump}.  If in addition $P_2(x)$ is negligible compared to $\P(X_1>x)$, as again is straightforward in the independent case, then we shall say that we have the \textit{principle of the} \textbf{single} \textit{big jump}.  If the dependence is very strong, for instance if $X_1 = X_2$ a.s. (almost surely), then clearly the principle of the single big jump fails.  We shall see in Example 3 a more interesting example where the principle of the big jump holds, but not the principle of the single big jump; but nonetheless a high level is exceeded because of a single big jump a positive fraction of the time. 

We consider sums of random variables that are conditionally independent on some sigma algebra. This is a fresh approach to studying the effect of dependence on subexponential sums and allows a great deal of generality (in particular, we need neither specify a particular subclass of subexponential distribution for which our results hold, nor assume the summands are identically distributed, nor specify any particular copula structure).  We believe this is a fruitful line of enquiry, both practically and theoretically, as the range of examples we give illustrates.

Clearly, any sequence of random variables can be considered to be conditionally independent by choosing an appropriate sigma algebra on which to condition.  This is an obvious observation, and in itself not really helpful.  However, there are practical situations where a conditional independence structure arises naturally from  the problem.  As an example, consider  a sequence of identical random variables $X_1, X_2, \ldots,X_n$, each with distribution function $F_{\beta}$ depending on some parameter $\beta$ that is itself drawn from a different distribution.  The $X_i$ are independent once $\beta$ is known: this is a typically Bayesian situation.  It is natural to view the $X_i$ as conditionally independent on the sigma algebra generated by $\beta$.  We suppose the $X_i$ to have subexponential (unconditional) distribution $F$ and ask under what conditions the distribution of the sum follows the principle of the single big jump.  

In addition to the assumption of conditional independence, we assume that the distributions of our random variables are asymptotically equivalent to multiples of a given reference subexponential distribution (or more generally we can assume weak equivalence to the reference distribution).  This allows us to consider non-identical random variables.  As an example, developed fully in Example 4, which follows ideas in \cite{LGH}, we consider the problem of calculating the \textit{discounted loss reserve}; this can also be viewed as finding the value of a perpetuity.  Let the i.i.d. sequence $X_1,X_2,\ldots,X_n$ denote the net losses in successive years, and the i.i.d sequence $V_1,\ldots,V_n$ denote the corresponding present value discounting factors, where the two sequences are mutually independent.  Then $Y_i = X_i \Pi_{j=1}^i V_j$ represents the present value of the net loss in year $i$, and $S_n = \Sigma_{i=1}^n Y_i$ is the discounted loss reserve.  Conditional on $\sigma(V_1,\ldots,V_n)$ the random variables $Y_i$ are independent.  Let the (unconditional) distribution function of $Y_i$ be $F_i$.  We suppose there is a reference subexponential distribution $F$ and finite constants $c_1, \ldots, c_n$, not all zero, such that, for all $i=1,2,\ldots,n$,
\[\lim_{x \to \infty} \frac{\oo{F_i}(x)}{\oo{F}(x)}=c_i.\]
We seek conditions on the dependence which will ensure that the principle of the single big jump holds for the discounted loss reserve.

More generally, we want to consider both deterministic sums and randomly stopped sums, where the stopping time $\tau$ is independent of the $X_i$ and has light-tailed distribution.

In \cite{FKZ} Foss, Konstantopoulos and Zachary studied time modulated random walks with heavy-tailed increments.  In their proofs of two key theorems (Theorems 2.2 and 3.2) they used a coupling argument involving the sum of two conditionally independent random variables which entailed proving a lemma (Lemma A.2) which considered a particular case of conditional independence.  The investigation in the present paper considers this problem in much greater generality, whilst retaining the flavour of the simple situation in \cite{FKZ}.

The statements of the propositions that we prove in Section 2 therefore hold no surprises, and indeed, once the conditions under which these propositions hold were determined, the proofs followed relatively straightforwardly with no need for complicated machinery.  The interest and effort was in the formulation of the conditions in the first place, which constituted the major intellectual work in this paper, and in finding means by which these conditions could be efficiently checked.

A very useful tool in the study of subexponential distributions is the class of functions which reflect the fact that every subexponential distribution is long-tailed.  For a given subexponential distribution $F$ this is the class of functions $\mathpzc{h}$, such that whenever $h \in \mathpzc{h}$, $h$ is monotonically tending to infinity, and
\beq\label{long-tail}\lim_{x \to \infty} \frac {\oo{F}(x-h(x))}{\oo{F}(x)} = 1.\eeq
A quantity that occurs often in the study of such distributions is
\beq\label{SE1}\int_{h(x)}^{x-h(x)} \oo{F}(x-y)F(dy),\eeq
for any $h$ satisfying \eqref{long-tail}.  The importance of this stems from the fact that this quantity is negligible compared to $\oo{F}(x)$ as $x \to \infty$.  Intuitively this means that, when considering the probability that the sum of two i.i.d. subexponential random variables exceeds some high level $x$, the probability that both of them are of 'intermediate' size is negligible compared to the probability that exactly one of them exceeds $x$.  Because of the form of the integral, it is convenient only to consider those functions $h$ which satisfy $h(x)<x/2$.

It is clear that if $h_1(x)$ belongs to the class $\mathpzc{h}$ then any $h_2(x) \leq h_1(x)$ also satisfies \eqref{long-tail}.  For many long-tailed functions a \textit{boundary class} of functions, $\HH$, exists such that the statement $h \in \mathpzc{h}$  is equivalent to $h(x) = o(H(x))$ for every $H \in \HH$.  This boundary class is particularly useful in dealing with expressions such as that in \eqref{SE1}.  In these types of expression we need to find a suitable function $h$, but the class of functions satisfying the long-tail property \eqref{long-tail} is very rich, and finding an appropriate function can be difficult.  We note that if we have found an appropriate $h_1$ satisfying \eqref{SE1}, then any other $h_2$ in the same class  such that $h_2(x) > h_1(x)$ will also satisfy \eqref{SE1}; we call this \textit{increasing function behaviour}.  We show that in cases where the boundary class exists, for any property that exhibits this increasing function behaviour, the property is satisfied for some function $h \in \mathpzc{h}$ if and only if the property is satisfied for all functions in the boundary class.  Further, since all functions in the boundary class are weakly equivalent (see Section 3 for precise definitions and statements) it suffices to verify such a property only for multiples of a single function.  We hope that this technical tool will be of use to other researchers.

We give a wide range of examples of collections of random variables, some satisfying the principle of the single big jump, some not, and we suggest that these examples are of independent interest in and of themselves.

The paper is structured as follows.  In Section 2 we formulate our assumptions, then state and prove our main results for conditionally independent non-negative random variables satisfying the principle of the single big jump, leaving the more general case of real-valued random variables to Section 5.  In Section 3 we introduce the concept of the Boundary Class for long-tailed distributions, and give some typical examples.  In Section 4 we give examples of conditionally independent subexponential random variables, some of which satisfy the principle of the single big jump, and one of which does not.  In Section 5 we extend our investigation to any real-valued subexponential random variables.  This involves imposing an extra condition.  We give an example that shows that this condition is non-empty and necessary.  Finally, in Section 6 we collect together the different notation we have used, and also give definitions of the standard classes of distributions (heavy-tailed, long-tailed, subexponential, regularly varying, and so on) that we use in this paper.

\section{Main Definitions, Results and Proofs}
  A distribution function $F$ supported on the positive half-line is subexponential if and only if 
\[\oo{F^{*2}}(x) \defn \int_{0}^{x} \oo{F}(x-y)F(dy) + \oo{F}(x) \sim 2\oo{F}(x).\]
It is known (see, for example, \cite{FZ}), and may be easily checked, that a distribution supported on the positive half-line is subexponential if and only if the following two conditions are met:
\begin{enumerate}
\item  $F$ is long-tailed.  That is, there exists a non decreasing function $h(x)$, tending to infinity, such that \eqref{long-tail} holds.\footnote{We observe that, given a random variable $X$ with subexponential distribution $F$, the function $h$ measures how light, compared to $X$, the distribution of a random variable $Y$ must be so that the tail distribution of the sum $X+Y$ is insensitive to the addition of $Y$, not assumed to be independent of $X$.  In particular, if $\P(Y>h(x)) =o(\oo{F}(x))$, then $\P(X+Y>x) \sim \P(X>x)$ regardless of how strong any dependence between $X$ and $Y$ is.  In the case of regular variation, this comment is originally due to C. Kl\"{u}ppelberg \cite{CK}.}  (Examples include: for $F$ regularly varying (see Section 6.2 for definition), then we can choose $h(x)=x^{\delta}$, where $0<\delta<1$; for $F$ Weibull, with parameter $0<\beta<1$, we can choose $h(x)=x^{\delta}$, where $0<\delta<1- \beta$.)
\item For any $h(x)<x/2$ tending monotonically to infinity,
\begin{equation}\label{intermed}
\int_{h(x)}^{x-h(x)}\oo{F}(x-y)F(dy) = o(\oo{F}(x)).
\end{equation}
\end{enumerate}

We work in a probability space $(\Omega,\FF,\P)$.
Let $X_i$, $i=1,2,\ldots$, be non-negative random variables with distribution function (d.f.) $F_i$.
Let $F$ be a subexponential reference distribution concentrated on the positive half-line and $h$ be a function satisfying the long-tailed condition \eqref{long-tail}. Let $\GG$ be a $\sigma$-algebra, $\GG \subset \FF$.  We make the following assumptions about the dependence structure of the $X_i$'s:

\begin{enumerate}
	\item[(D1)]
	$X_1, X_2, \ldots$ are conditionally independent on $\GG$. That is, for any collection of indices $\{ i_1, \ldots , i_r \}$, and any collection of sets $\{B_{i_1}, \ldots , B_{i_r} \}$, all belonging to $\FF$, then $\P(X_{i_1} \in B_{i_1}, \ldots , X_{i_r} \in B_{i_r} | \GG)) = \P(X_{i_1} \in B_{i_1}|\GG )\P(X_{i_2} \in B_{i_2}| \GG )\ldots \P(X_{i_r} \in B_{i_r}| \GG).$
	\item[(D2)]
	For each $i \geq 1$,	$ \oo{F}_i(x) \sim c_i\oo{F}(x)$, with  at least one $c_i \neq 0$, and for all $i \geq 1$ there exists $c>0$ and $x_0>0$ such that $\oo{F}_i(x) \leq c\oo{F}(x)$ for all $x>x_0$.
	\item[(D3)]
	For each $i \geq 1$ there exists a non-decreasing functions $r(x)$ and an increasing collection of sets $B_i(x) \in \GG$, with $B_i(x) \to \Omega$ as $x \to \infty$, such that
\begin{equation}\label{Conditional}	
\P(X_i>x|\GG) \1(B_i(x)) \leq r(x) \oo{F}(x) \1(B_i(x)) \quad \textrm{almost surely}.
\end{equation}	
and, as $x \to \infty$, uniformly in $i$,	
	\begin{enumerate}
	\item[(i)] \begin{equation}\P(\oo{B}_i(h(x)))=o(\oo{F}(x))\end{equation}
	\item[(ii)] \begin{equation}r(x) \oo{F}(h(x)) = o(1),\end{equation}
	\item[(iii)] \begin{equation}r(x) \int_{h(x)}^{x-h(x)} \oo{F}(x-y)F(dy) = o(\oo{F}(x)).\end{equation}
	\end {enumerate}
\end{enumerate}

\begin{remark}
In many cases the the dependence between the $\{X_i\}$ enables us to choose a common $B(x) = B_i(x), \forall i$. However, we allow for situations where this is not the case.  There is no need for a similiar generality in choice of the function $r(x)$ because of the uniformity in $i$.  The function $r(x)$ can be chosen so that it is only eventually monotone increasing, and in the case where we are only considering a finite collection of random variables $\{X_i\}$ it is sufficient to show that the chosen function is asymptotically equivalent to a monotone increasing function.
\end{remark}
\begin{remark}
If the collection of r.v.s of interest is finite, then clearly the uniformity in $i$ needed in conditions (D2) and (D3) is guaranteed.
\end{remark}
\begin{remark}
If the reference distribution $F$ has tail which is intermediately regularly varying (see Section 6.2 for definitions) then it will be shown later that we can check that the conditions (D3) hold for some $h(x)$ satisfying \eqref{long-tail} by verifying that the conditions hold when $h(x)$ is replaced all the functions $H(x)=cx$ where $0<c<1/2$.
\end{remark}
\begin{remark}
It will sometimes be the case that the random variables $X_1,X_2,\ldots$ are not identically distributed, and are not all asymptotically equivalent to the reference distribution $F$.  In these cases it is sufficient to require that they are weakly equivalent to $F$, and that they are subexponentially distributed.  The uniformity condition will still be required.
\end{remark}

\begin{remark}
The need for and the meaning of the bounding functions $r(x)$ and the bounding sets $B_i(x)$ will become apparent when we give some examples.  However, some preliminary comments may assist at this stage.
\begin{itemize}
\item In order to preserve the desired properties from the independent scheme, we need to ensure that the influence of the $\sigma$-algebra $\GG$ that controls the dependence is not too strong.  This we have done by introducing the bounding function $r(x)$ for the $i^{\textrm{th}}$ random variable, which ensures that there are not events in $\GG$ which totally predominate if a high level is exceeded.  Although $r(x)$ may tend to infinity, it must not do so too quickly.
\item Depending on the nature of the interaction of $\GG$ with the random variables, there may be events in $\GG$ that do overwhelmingly predominate when exceeding a high level; this is not a problem as long as these events are unlikely enough and their probability tends to zero as the level tends to infinity.  Within the bounding sets $B_i(x)$ no events in $\GG$ predominate, and we then require that the compliments $\oo{B}_i(x)$ decay quickly enough.
\end{itemize}
\end{remark}

We have the following results.

\begin{proposition}\label{nSum}
Let $X_i$, $i=1,2, \ldots$ satisfy conditions (D1), (D2) and (D3) for some subexponential $F$ concentrated on the positive half-line and for some $h(x)$ satisfying \eqref{long-tail}.  Then
\[\P(X_1+ \cdots + X_n>x) \sim \sum_{i=1}^n \P(X_i>x) \sim \((\sum_{i=1}^n c_i\))\oo{F}(x).\] 
\end{proposition}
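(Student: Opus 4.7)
The strategy is induction on $n$. The case $n=2$ contains the technical work, and the step $n \to n+1$ applies exactly the same argument with $S_n := X_1 + \cdots + X_n$ in the role of $X_1$ and $X_{n+1}$ in the role of $X_2$. This is legitimate because $X_{n+1}$ is conditionally independent of $(X_1,\ldots,X_n)$ on $\GG$ (hence of $S_n$), and the inductive hypothesis $\oo{F}_{S_n}(x) \sim (\sum_{i \le n} c_i)\oo{F}(x)$ supplies both the long-tail behaviour of $S_n$ and the eventual domination $\oo{F}_{S_n}(y) \le c' \oo{F}(y)$ needed below.

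\textit{Lower bound.} By Bonferroni,
\[
\P(S_n > x) \ge \P(\max_i X_i > x) \ge \sum_i \P(X_i > x) - \sum_{i<j}\P(X_i > x, X_j > x).
\]
By (D1), each joint tail equals $\E[\P(X_i > x|\GG)\P(X_j > x|\GG)]$; splitting by $B_i(x)$ and using (D3) pointwise gives, on $B_i(x)$, the bound $r(x)\oo{F}(x)\P(X_j > x|\GG)$, whose expectation is $O(r(x)\oo{F}(x)^2)$. Since $\oo{F}$ is non-increasing and $h(x) \le x$, $r(x)\oo{F}(x) \le r(x)\oo{F}(h(x)) = o(1)$ by (D3)(ii), so this contribution is $o(\oo{F}(x))$; on $\oo{B}_i(x) \subseteq \oo{B}_i(h(x))$ it is at most $\P(\oo{B}_i(h(x))) = o(\oo{F}(x))$ by (D3)(i). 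Combined with (D2) this yields $\P(S_n > x) \ge (\sum_i c_i + o(1))\oo{F}(x)$.

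\textit{Upper bound} ($n=2$). Decompose
\[
\{X_1 + X_2 > x\} \subseteq \{X_1 > x - h(x)\} \cup \{X_2 > x - h(x)\} \cup I,
\]
where $I = \{h(x) < X_j \le x - h(x),\ j = 1,2;\ X_1 + X_2 > x\}$. The first two sets contribute $\sim c_j \oo{F}(x)$ by (D2) and long-tailedness of $\oo{F}$. For $I$, using (D1),
\[
\P(I) = \E\Bigl[\int_{h(x)}^{x-h(x)} \P(X_2 > x - y | \GG)\, P_{X_1|\GG}(dy)\Bigr].
\]
Split by $B_2(h(x))$: on that set $B_2(h(x)) \subseteq B_2(x-y)$ for $y \le x - h(x)$, and monotonicity of $r$ together with (D3) gives $\P(X_2 > x - y|\GG) \le r(x)\oo{F}(x-y)$, so the contribution is at most $r(x) \int_{h(x)}^{x-h(x)} \oo{F}(x-y)\, F_1(dy)$. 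Integration by parts together with $\oo{F}_1 \le c \oo{F}$ (from (D2)) dominates this by $c r(x) \oo{F}(h(x))\oo{F}(x-h(x)) + c r(x)\int_{h(x)}^{x-h(x)} \oo{F}(x - u)\, F(du)$, which is $o(\oo{F}(x))$: the first term by (D3)(ii) together with $\oo{F}(x-h(x)) \sim \oo{F}(x)$, the second by (D3)(iii). The complementary piece is at most $\P(\oo{B}_2(h(x))) = o(\oo{F}(x))$ by (D3)(i).

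The main obstacle is the intermediate event $I$. One must arrange the argument so that the (D3) conditional bound is applied to only one of the two summands (here $X_2$; in the inductive step, $X_{n+1}$), then use integration by parts to transfer the integrator from $F_1$ (or $F_{S_n}$) to the reference $F$ so that (D3)(iii) can finish the job. This asymmetry is essential because no conditional tail bound on $\P(S_n > x | \GG)$ is available, which forces $S_n$ to play the integrator role in the inductive step.
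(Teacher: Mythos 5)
Your proof is correct and takes essentially the same route as the paper: the same decomposition at level $h(x)$, conditional independence with the $(B_2,r)$ bound applied only to the second summand, the joint-tail estimate for the lower bound, and induction with $S_n$ playing the role of the first variable (the integrator). The only difference is that you spell out, via integration by parts and the domination $\oo{F}_1 \leq c\oo{F}$ from (D2), the transfer from $F_1(dy)$ (or $F_{S_n}(dy)$) to $F(dy)$ needed before invoking (D3iii) --- a step the paper leaves implicit in this proof (it appears explicitly, via a symmetrization of the convolution, only in the proof of the Kesten-type lemma).
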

\begin{remark}
Lemma A2 in \cite{FKZ} follows directly from this proposition.
\end{remark}

In order to use dominated convergence to generalize Proposition \ref{nSum} to random sums, we need the following extension of Kesten's Lemma.

\begin{lemma}\label{Kesten}
With the conditions of (D1), (D2) and (D3), for any $\epsilon > 0$ there exist $V(\epsilon) > 0$ and $x_0=x_0(\epsilon)$ such that, for any $x > x_0$ and $n \geq 1$,
\[\P(S_n > x) \leq V(\epsilon)(1+\epsilon)^n \oo{F}(x).\]
\end{lemma}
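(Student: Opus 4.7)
The plan is to adapt the classical proof of Kesten's Lemma to the conditionally independent setting, by induction on $n$. Fix $\epsilon>0$ (we shall actually carry out the argument with $\epsilon/K$ for some absolute constant $K$, so that the constants accumulated below yield a final factor $(1+\epsilon)$). Choose $x_0=x_0(\epsilon)$ large enough that, uniformly in $i$: (a) $\oo{F_i}(x)\le c\oo{F}(x)$ by (D2); (b) $\oo{F}(x-h(x))\le(1+\epsilon)\oo{F}(x)$ (long-tailedness); (c) each of the three $o(\oo{F}(x))$ quantities appearing in (D3) is at most $\epsilon\oo{F}(x)$; and (d) $h(x)\ge x_0$ and $x-h(x)\ge x_0$. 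Put $\alpha_n\defn\sup_{x\ge x_0}\P(S_n>x)/\oo{F}(x)$. Since $\alpha_1\le c$ by (D2), it suffices to establish the linear recursion
\[\alpha_{n+1}\le\alpha_n(1+\epsilon)+C\]
for some constant $C=C(\epsilon)$. This iterates to $\alpha_n\le V(\epsilon)(1+\epsilon)^n$ with $V=(c+C/\epsilon)/(1+\epsilon)$.

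For the inductive step, split on $X_{n+1}$ (splitting on $S_n$ would produce two $\alpha_n$ terms and cause the recursion to blow up):
\[\P(S_{n+1}>x)=\P(X_{n+1}>x)+J_1+J_2+J_3,\]
where $J_k$ is the probability that $S_{n+1}>x$ and $X_{n+1}$ lies respectively in $[0,h(x)]$, $(h(x),x-h(x)]$, and $(x-h(x),x]$. By (D2) the first term is at most $c\oo{F}(x)$. On the event defining $J_1$ necessarily $S_n>x-h(x)$, so by the inductive hypothesis and (b), $J_1\le\alpha_n(1+\epsilon)\oo{F}(x)$; similarly $J_3\le c(1+\epsilon)\oo{F}(x)$.

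The main obstacle is the intermediate term $J_2$. By conditional independence (D1),
\[J_2=\E\left[\int_{h(x)}^{x-h(x)}\P(S_n>x-y\mid\GG)\,F_{X_{n+1}|\GG}(dy)\right].\]
Decompose on $B_{n+1}(h(x))$: off $B_{n+1}(h(x))$ the integrand is bounded by $1$ and (D3)(i) gives a contribution of order $o(\oo{F}(x))$. On $B_{n+1}(h(x))$, monotonicity of both $B_{n+1}$ and $r$ yields $\oo{F_{X_{n+1}|\GG}}(y)\le r(x)\oo{F}(y)$ for every $y\ge h(x)$. A Stieltjes integration by parts then produces a boundary term $\P(S_n>x-h(x)\mid\GG)\cdot r(x)\oo{F}(h(x))$, controlled by (D3)(ii), the inductive hypothesis and (b), together with an interior integral $r(x)\int_{h(x)}^{x-h(x)}\oo{F}(y)\,d_y\P(S_n>x-y\mid\GG)$. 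Taking expectation and applying Fubini (valid because the random measure $d_y\P(S_n>x-y\mid\GG)$ is non-negative with unconditional mean $d_y\P(S_n>x-y)$), followed by the change of variable $s=x-y$, rewrites this interior integral as $r(x)\int_{h(x)}^{x-h(x)}\oo{F}(x-s)\,F_{S_n}(ds)$. A second integration by parts, combined with the inductive tail bound $\oo{F_{S_n}}(s)\le\alpha_n\oo{F}(s)$, reduces the expression to a multiple of $r(x)\int_{h(x)}^{x-h(x)}\oo{F}(x-u)\,F(du)$, which is $o(\oo{F}(x))$ by (D3)(iii). Collecting terms gives $J_2\le\alpha_n\cdot o(\oo{F}(x))+o(\oo{F}(x))$.

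Summing the four pieces yields $\P(S_{n+1}>x)\le\alpha_n(1+O(\epsilon))\oo{F}(x)+C\oo{F}(x)$; the recursion of the required shape is then recovered by starting from a suitably small multiple of $\epsilon$. The principal difficulty is the handling of $J_2$: (D3) provides \emph{conditional} tail information about the single summand $X_{n+1}$, whereas the inductive hypothesis supplies only an \emph{unconditional} tail estimate for $S_n$. The double integration by parts is the mechanism by which these two different pieces of information are combined, ultimately shifting tail control from $F_{X_{n+1}|\GG}$ onto the unconditional $F_{S_n}$ and then via $\alpha_n$ back onto the reference $F$, where (D3)(iii) applies.
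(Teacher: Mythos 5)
Your overall strategy is sound and is essentially the same Kesten-type induction as the paper's: define $\alpha_n=\sup_{x>x_0}\P(S_n>x)/\oo{F}(x)$, derive a recursion $\alpha_{n+1}\le(1+O(\epsilon))\alpha_n+C$, and control the intermediate range using (D3). The genuine difference is the decomposition: you split on the last summand $X_{n+1}$, whereas the paper splits on the partial sum $S_{n-1}$ ($S_n$ in your indexing). The paper's split lets it apply the conditional bound \eqref{Conditional} directly to $\P(X_n>x-y\mid\GG)$ inside $\int\P(S_{n-1}\in dy\mid\GG)$, so a single integration-by-parts swap plus the unconditional bound $\P(S_{n-1}>x-y)\le(\alpha_{n-1}+\alpha)\oo{F}(x-y)$ finishes the intermediate term; your split forces the two successive Stieltjes integrations by parts you describe, first to move tail control from $F_{X_{n+1}\mid\GG}$ onto $\oo{F}$ via $r(x)$ on $B_{n+1}(h(x))$, then to move it from $F_{S_n}$ onto $F$ via $\alpha_n$ so that (D3iii) applies. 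Both routes work and cost about the same; note, though, that your parenthetical justification for the choice is wrong: splitting on the partial sum does \emph{not} blow up, because the region where the partial sum is small is bounded through the tail of the single summand $X_n$ (giving a constant $cL(x_0)$, not another $\alpha_{n-1}$), and the paper's coefficient of $\alpha_{n-1}$ is $L(x_0)+\epsilon/2\le 1+\tfrac34\epsilon$.

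There is one concrete defect in your write-up: condition (d), ``choose $x_0$ so that $h(x)\ge x_0$ and $x-h(x)\ge x_0$ for all $x>x_0$,'' is impossible, since $h(x)<x/2$ forces $h(x)<x_0$ for $x$ just above $x_0$, and likewise $x-h(x)<x_0$ can occur there. You need this (or a substitute) wherever you invoke $\oo{F}_{S_n}(s)\le\alpha_n\oo{F}(s)$ at $s=h(x)$ or $s=x-h(x)$ (the boundary terms of both integrations by parts, and your bound on $J_1$). The standard repair — the one the paper uses — is to set $\alpha=1/\oo{F}(x_0)$, so that $\P(S_n>s)\le 1\le\alpha\oo{F}(s)$ for $0<s\le x_0$ and hence $\P(S_n>s)\le(\alpha_n+\alpha)\oo{F}(s)$ for all $s>0$; the extra $\alpha$ only multiplies $o(1)$ quantities and is absorbed into the additive constant $C$ of your recursion, leaving the coefficient of $\alpha_n$ untouched. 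With that amendment (and the uniformity in $i$ of (D2)–(D3) making your $o(\cdot)$ bounds uniform in $n$, which you implicitly use), your argument goes through.
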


\begin{proposition}\label{random sum}
If, in addition to the conditions of (D1), (D2) and (D3), $\tau$ is an independent counting random variable such that $\E(e^{\gamma \tau})< \infty$ for some $\gamma > 0$, then 
\begin{align*}
\P(X_1+ \cdots + X_{\tau} > x) & \sim \E\((\sum_{i=1}^{\tau} \P(X_i>x)\))\\
& \sim \E\((\sum_{i=1}^{\tau} c_i \right)\oo{F}(x).
\end{align*}
\end{proposition}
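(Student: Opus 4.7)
The plan is to condition on $\tau$ and use dominated convergence, with the pointwise limits provided by Proposition \ref{nSum} and the integrable dominating function provided by Lemma \ref{Kesten}. Since $\tau$ is independent of $\{X_i\}$, independence gives
\[
\P(S_\tau > x) = \sum_{n=0}^\infty \P(\tau = n)\, \P(S_n > x),
\]
so dividing by $\oo{F}(x)$ the claim reduces to justifying the interchange of the limit $x \to \infty$ with the sum over $n$. For each fixed $n$, Proposition \ref{nSum} yields
\[
\frac{\P(S_n > x)}{\oo{F}(x)} \longrightarrow \sum_{i=1}^n c_i,
\]
which is the candidate limit inside the sum.

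For the interchange, Lemma \ref{Kesten} supplies, for every $\epsilon > 0$ and all sufficiently large $x$, the bound $\P(S_n > x)/\oo{F}(x) \leq V(\epsilon)(1+\epsilon)^n$. The decisive choice is to pick $\epsilon$ small enough that $\log(1+\epsilon) < \gamma$; the exponential moment hypothesis then gives
\[
\sum_{n=0}^\infty \P(\tau = n)\, V(\epsilon)(1+\epsilon)^n \;\leq\; V(\epsilon)\, \E\bigl(e^{\gamma \tau}\bigr) \;<\; \infty.
\]
This produces a summable dominant, and dominated convergence yields
\[
\frac{\P(S_\tau > x)}{\oo{F}(x)} \longrightarrow \sum_{n=0}^\infty \P(\tau=n) \sum_{i=1}^n c_i = \E\!\left(\sum_{i=1}^\tau c_i\right).
\]

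For the intermediate asymptotic equivalence $\P(S_\tau > x) \sim \E\!\left(\sum_{i=1}^\tau \P(X_i>x)\right)$, a similar dominated convergence argument works at the level of $\P(X_i>x)/\oo{F}(x)$: by (D2) this ratio converges to $c_i$ and is uniformly bounded by $c$ for large $x$, so the dominant is $c\tau$, which is integrable since $\E(e^{\gamma \tau}) < \infty$ implies $\E(\tau) < \infty$.

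I expect the main (and only nontrivial) obstacle to be the calibration between $\epsilon$ in Lemma \ref{Kesten} and the exponential rate $\gamma$: Lemma \ref{Kesten} is only useful if we can take $\epsilon$ as small as we wish while keeping $V(\epsilon)$ finite, which is precisely what its statement guarantees. Once this is in place, the rest is a routine application of dominated convergence; no further machinery from Sections 3--4 is required here, since the verification of (D1)--(D3) has already been done at the level of the deterministic sum.
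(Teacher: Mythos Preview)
Your proposal is correct and follows exactly the approach the paper takes: the paper's own proof is a single sentence stating that the result follows directly from Proposition~\ref{nSum}, Lemma~\ref{Kesten}, and the dominated convergence theorem. You have simply spelled out the details of that dominated convergence argument (choice of $\epsilon$ with $\log(1+\epsilon)<\gamma$, the summable dominant $V(\epsilon)(1+\epsilon)^n$, and the handling of the intermediate equivalence via~(D2)), all of which are implicit in the paper's one-line proof.
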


Clearly, checking that (D3ii) and (D3iii) hold is the most laborious part of guaranteeing the conditions for these propositions.  Hence we propose a sufficient condition, analogous to the well-known condition for subexponentiality.

\begin{proposition}\label{haz}
Let $F$ be a subexponential distribution concentrated on the positive half-line, $h(x)$ be a function satisfying \eqref{long-tail}, and $r(x)$ a non-decreasing function.  Let $Q(x) \defn -\log(\oo{F}(x))$, the hazard function for $F$, be concave for $x \geq x_0$, for some $x_0<\infty$.  Let
\begin{equation}\label{hazard} x r(x) \oo{F}(h(x)) \to 0 \quad \textrm{as } x \to \infty.\end{equation}
Then conditions (D3ii) and (D3iii) are satisfied.
\end{proposition}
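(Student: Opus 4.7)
The plan is to establish (D3ii) and (D3iii) separately. Condition (D3ii), namely $r(x)\oo{F}(h(x)) = o(1)$, is immediate: dividing the hypothesis $xr(x)\oo{F}(h(x)) \to 0$ by $x \to \infty$ gives exactly this.

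For (D3iii), write $I(x) := \int_{h(x)}^{x-h(x)}\oo{F}(x-y)F(dy)$ and set $\phi(y) := \oo{F}(x-y)\oo{F}(y)$. The key observation I would use is that, for $x$ large enough that $h(x) \geq x_0$, concavity of $Q$ forces $\phi$ to be U-shaped on $[h(x), x-h(x)]$. Indeed $\log\phi(y) = -Q(x-y) - Q(y)$, so (working with right derivatives where $Q$ is not differentiable) $(\log\phi)'(y) = Q'(x-y) - Q'(y)$; since $Q'$ is non-increasing, this is $\leq 0$ for $y < x/2$ and $\geq 0$ for $y > x/2$. Combined with the symmetry $\phi(y) = \phi(x-y)$, the maximum of $\phi$ on $[h(x), x-h(x)]$ is attained at the endpoints and equals $\oo{F}(x-h(x))\oo{F}(h(x))$.

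Rearranging as $\oo{F}(x-y) \leq \oo{F}(x-h(x))\oo{F}(h(x))/\oo{F}(y)$ on the range of integration, I would then obtain
\[I(x) \leq \oo{F}(x-h(x))\oo{F}(h(x)) \int_{h(x)}^{x-h(x)} \frac{F(dy)}{\oo{F}(y)} = \oo{F}(x-h(x))\oo{F}(h(x))\bigl(Q(x-h(x)) - Q(h(x))\bigr),\]
using continuity of $\oo{F}$ (forced by concavity of $Q$) for the last equality. Now \eqref{long-tail} gives $\oo{F}(x-h(x)) \leq (1+o(1))\oo{F}(x)$; concavity of $Q$ gives $Q(x-h(x)) - Q(h(x)) \leq xQ'(h(x))$; and $Q'(h(x))$ is bounded for large $x$ since $Q'$ is non-increasing on $[x_0, \infty)$ and $h(x) \to \infty$. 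Assembling these,
\[r(x) I(x) \leq (1+o(1))\, Q'(h(x))\, \oo{F}(x) \cdot \bigl[xr(x)\oo{F}(h(x))\bigr] = o(\oo{F}(x))\]
by the hypothesis, which is (D3iii).

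The main obstacle is recognising the U-shape of $\phi$ on $[h(x), x-h(x)]$: the naive bound $\phi(y) \leq \oo{F}(x)$ (from $Q(x) \leq Q(x-y) + Q(y)$) gives only $I(x) \leq \oo{F}(x)(Q(x-h(x)) - Q(h(x)))$, which would demand the much stronger hypothesis $r(x)(Q(x-h(x)) - Q(h(x))) \to 0$. Extracting the extra factor $\oo{F}(h(x))$ via the U-shape is what makes the hypothesis $xr(x)\oo{F}(h(x)) \to 0$ precisely strong enough.
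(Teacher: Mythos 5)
Your proof is correct, and on the (D3iii) half it takes a genuinely different route from the paper's, while sharing the same core observation: concavity of $Q$ forces $Q(y)+Q(x-y)$ to be minimized at the endpoints of $[h(x),x-h(x)]$, which extracts the decisive factor $\oo{F}(h(x))\oo{F}(x-h(x))$. The paper then deals with the measure $F(dy)$ by invoking a separately proved auxiliary lemma (for any long-tailed $F$ there is a constant $C$ with $\int_a^b \oo{F}(x-y)F(dy) \le C\int_a^b \oo{F}(x-y)\oo{F}(y)\,dy$, proved by a unit-grid discretization) and then bounds the resulting Lebesgue integral crudely by the interval length times the endpoint maximum, arriving at $Cx\,\oo{F}(h(x))\oo{F}(x-h(x))$. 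You instead keep $F(dy)$, divide the endpoint bound through by $\oo{F}(y)$, and evaluate $\int_{h(x)}^{x-h(x)} F(dy)/\oo{F}(y) = Q(x-h(x))-Q(h(x))$ exactly --- legitimate, since a finite concave $Q$ is continuous on $(x_0,\infty)$ and $h(x)\to\infty$ puts the whole range there for large $x$ --- and then control the hazard increment via the concave gradient inequality $Q(x-h(x))-Q(h(x)) \le x\,Q'_+(h(x))$, with $Q'_+(h(x))$ bounded because the one-sided derivative of a concave function is non-increasing and non-negative here. Your final bound $x\,Q'_+(h(x))\,\oo{F}(h(x))\oo{F}(x-h(x))$ and the paper's $Cx\,\oo{F}(h(x))\oo{F}(x-h(x))$ are equivalent for the purpose, and both conclude identically from \eqref{long-tail} and \eqref{hazard}. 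What your route buys is that it dispenses with the auxiliary lemma entirely and stays within the concavity framework; what the paper's route buys is a lemma that needs no continuity and is valid for arbitrary long-tailed $F$ (and is of independent interest, having been used without proof elsewhere), at the cost of an unspecified constant. The (D3ii) step is the same trivial observation in both.
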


Before we can give examples of our conditions in practice, we need to address a specific issue.  The conditions depend on being able to choose bounding functions $r(x)$ and bounding sets $B_i(x)$, which themselves depend on our choice of the little-$h$ function satisfying \eqref{long-tail}.  The choice of $h(x)$ is not unique, so the fact that one is unable to find appropriate bounding functions and sets for a particular little-$h$ function does not imply that one cannot find them for some other choice of $h$.  This is the problem we address in the next section.

Now we proceed with the proofs of our results.
\def\proofname{Proof of Proposition \ref{nSum}}
\begin{proof}
First consider $X_1+X_2$.  Assume, without loss of generality, that $c_1>0$.  Let $Y$ be a random variable, independent of $X_1$ and $X_2$ with distribution function $F$.  We have the inequalities
\begin{align*}
\P(X_1+X_2>x)  \leq & \P(X_1>x-h(x))+\P(X_2>x-h(x))\\
	& +\P(h(x)<X_1\leq x-h(x),X_2>x-X_1),
	\end{align*}
and
\[\P(X_1+X_2>x)  \geq \P(X_1>x) + \P(X_2>x) - \P(X_1 >x, X_2 >x).\]
Now,
\begin{align*}
& \P(h(x)<X_1\leq x-h(x),X_2>x-X_1)\\
= & \E(\P(h(x)<X_1\leq x-h(x),X_2>x-X_1|\GG))\\
= & \E\(( \int_{h(x)}^{x-h(x)}\P(X_1 \in dy|\GG)\P(X_2>x-y|\GG)(\1(B_2(x-y)) + \1(\oo{B}_2(x-y)))\))\\
\leq & r(x) \E\left( \int_{h(x)}^{x-h(x)}\P(X_1 \in dy|\GG)\P(Y>x-y)\right) + \E(\1(\oo{B}_2(h(x))))\\
= & r(x) \int_{h(x)}^{x-h(x)}\P(X_1 \in dy) \oo{F}(x-y) + o(\oo{F}(x))\\
= & o(\oo{F}(x)).
\end{align*}
Also,
\begin{align*}
& \P(X_1>x,X_2>x)\\
= & \E(\P(X_1>x,X_2>x|\GG)(\1(B_2(x))+\1(\oo{B}_2(x))))\\
\leq & \E(\P(X_1>x|\GG)\P(X_2>x|\GG)\1(B_2(x))) + \E(\1(\oo{B}_2(x)))\\
\leq & r(x)\oo{F}(x)\P(X_1>x) + o(\oo{F}(x))\\
= & o(\oo{F}(x)).
\end{align*}

Hence, $\P(X_1+X_2>x) \sim \P(X_1>x)+\P(X_2>x)$.
Since $c_1>0$, then $\P(X_1>x) + \P(X_2>x) \sim (c_1+c_2) \oo{F}(x)$.

Then, by induction, we have the desired result.
\end{proof}

\def\proofname{Proof of Lemma \ref{Kesten}}
\begin{proof}
The proof follows the lines of the original proof by Kesten.  Again, let $Y$ be a random variable, independent of $X_1$ and $X_2$, and with distribution function $F$.
For $x_0 \geq 0$, which will be chosen later, and $k \geq 1$ put
\[\alpha_k = \alpha_k(x_0) \defn \sup_{x> x_0} \frac{\P(S_k>x)}{\oo{F}(x)}.\]
Also observe that 
\[\sup_{0<x\leq x_0} \frac{\P(S_k>x)}{\oo{F}(x)} \leq \frac{1}{\oo{F}(x_0)} \defn \alpha.\]
Take any $\epsilon > 0$.  Recall that for all $i>0$, $\oo{F}_i(x) \leq c \oo{F}(x)$, for some $c>0$ and for all $x>0$.  Then for any $n>1$
\bal
\P(S_n > x) = & \P(S_{n-1} \leq h(x), X_n > x - S_{n-1})\\
&+ \P(h(x) < S_{n-1} \leq x-h(x), X_n > x-S_{n-1})\\
&+ \P(S_{n-1} > x-h(x), X_n > x-S_{n-1})\\
\equiv & P_1(x) +P_2(x) +P_3(x).
\end{align*}
We bound
\[P_1(x)  \leq \P(X_n > x-h(x))  \leq c L(x_0) \oo{F}(x) \]
and
\[P_3(x)  \leq \P(S_{n-1} > x-h(x))  \leq \alpha_{n-1}L(x_0) \oo{F}(x)\]
for $x \geq x_0$, where $L(x) =  \sup_{y \geq x}\frac{\oo{F}( y-h(y))}{\oo{F}(y)}$.\\
For $P_2(x)$,
\bal
& P_2(x) =  \P(h(x) < S_{n-1} \leq x-h(x), X_n > x-S_{n-1})\\
& =  \E\((\int_{h(x)}^{x-h(x)} \P(S_{n-1} \in dy| \GG)\P(X_n>x-y|\GG)(\1(B_n(x-y))+\1(\oo{B}_n(x-y)))\))\\
& \leq \E\((r_n(x)\int_{h(x)}^{x-h(x)} \P(S_{n-1} \in dy| \GG)\P(Y>x-y)\)) + \P(\oo{B}_n(h(x)))\\
& =  r(x)\int_{h(x)}^{x-h(x)} \P(S_{n-1} \in dy)\P(Y>x-y) + \P(\oo{B}_n(h(x)))\\
& \leq  r(x) \(( \int_{h(x)}^{x-h(x)} \P(Y \in dy)\P(S_{n-1}>x-y) + \P(S_{n-1}>h(x))\P(Y>x-h(x))\))\\
&  \quad \quad + \P(\oo{B}_n(h(x)))\\
& \leq (\alpha_{n-1}+\alpha)r(x) \(( \int_{h(x)}^{x-h(x)} \P(Y \in dy)\P(Y >x-y) + \P(Y>h(x))\P(Y>x-h(x))\))\\
& \quad \quad  + \P(\oo{B}_n(h(x)))\\
& = (\alpha_{n-1}+\alpha)\(( r(x)\int_{h(x)}^{x-h(x)} \frac{\oo{F}(x-y)}{\oo{F}(x)}F(dy) + 
		r(x)\oo{F}(h(x))\oo{F}(x-h(x))\)) + \P(\oo{B}_n(h(x))) .
\end{align*}
We now choose $x_0$ such that, for all $x \geq x_0$,
\bal
\frac{\oo{F}(x-h(x))}{\oo{F}(x)} \leq L(x_0) \leq & 1 + \frac{\epsilon}{4};\\
r(x) \int_{h(x)}^{x-h(x)} \frac{\oo{F}(x-y)}{\oo{F}(x)}F(dy) \leq & \frac{\epsilon}{4};\\
 \oo{F}(h(x)) L(x_0) \leq & \frac{\epsilon}{4};\\
\frac{\P(\oo{B}_n(h(x)))}{\oo{F}(x)} \leq & 1
\end{align*}
which can be done by virtue of the long-tailedness of $F$ and conditions (D3).  We then have that
\[P_2(x) \leq \frac{\epsilon}{2} (\alpha_{n-1}+\alpha) \oo{F}(x) + \oo{F}(x)\]
We therefore have
\begin{align*}
\P(S_n > x) & \leq cL(x_0)\oo{F}(x) + \frac{\epsilon}{2}(\alpha_{n-1}+\alpha) \oo{F}(x) + \oo{F}(x) + \alpha_{n-1}L(x_0)\oo{F}(x)\\
& \leq  R\oo{F}(x) + (1 + \frac{3}{4}\epsilon)\alpha_{n-1} \oo{F}(x),
\end{align*}
for some $0<R<\infty$.  Hence
\[\alpha_n \leq R + (1 + \frac{3}{4}\epsilon) \alpha_{n-1}.\]
Then, by induction we have
\begin{align*}
\alpha_n & \leq \alpha_1 (1+\frac{3}{4}\epsilon)^{n-1} + R \sum_{r=0}^{n-2} (1+\frac{3}{4}\epsilon)^r\\
&  \leq Rn(1+\frac{3}{4}\epsilon)^{n-1}\\
& \leq V(\epsilon)(1+\epsilon)^n,
\end{align*}
for some constant $V(\epsilon)$ depending on $\epsilon$.

This completes the proof.
\end{proof}
\def\proofname{Proof of Proposition \ref{random sum}}
\begin{proof}
The proof follows directly from Proposition \ref{nSum}, Lemma \ref{Kesten} and the dominated convergence theorem.
\end{proof}

Before proving Proposition \ref{haz}, we prove the following Lemma, which was originally used without proof in \cite{DFK}.

\begin{lemma}
Let $F$ be long-tailed and concentrated on the positive real line.  Then there exists a constant $C > 0$ such that for any $b>a>0$, 
\[\int_a^b \oo{F}(x-y)F(dy) \leq C \int_a^b \oo{F}(x-y) \oo{F}(y) dy.\]
\end{lemma}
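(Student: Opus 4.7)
The plan is to compare the two integrals on a unit-interval discretization of $[a,b]$, using the long-tail property of $F$ to control the ratios that arise.

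The first step is to extract a uniform constant from long-tailedness: since $\oo{F}(y-1)/\oo{F}(y) \to 1$ as $y \to \infty$ and $\oo{F}$ is positive on every bounded interval, one obtains a constant $K \geq 1$ such that $\oo{F}(y-1) \leq K\,\oo{F}(y)$ for all $y \geq 1$. Equivalently, $\oo{F}(y) \leq K\,\oo{F}(y+1)$ for every $y \geq 0$, and both versions will be used below.

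Next I would partition $[a,b]$ into pieces of length (approximately) $1$. On a typical unit subinterval $[t,t+1]\subset[a,b]$, the monotonicity of $\oo F$ yields the crude upper bound
\[
\int_t^{t+1}\!\oo{F}(x-y)\,F(dy)\;\leq\;\oo{F}(x-t-1)\bigl[\oo{F}(t)-\oo{F}(t+1)\bigr]\;\leq\;\oo{F}(x-t-1)\oo{F}(t),
\]
while applying the ratio bound of the previous step to both factors of the integrand on the right-hand side gives the lower bound
\[
\int_t^{t+1}\!\oo{F}(x-y)\oo{F}(y)\,dy\;\geq\;\oo{F}(x-t)\,\oo{F}(t+1)\;\geq\;K^{-2}\,\oo{F}(x-t-1)\oo{F}(t).
\]
Comparing the two, the $F(dy)$-integral over a unit subinterval is at most $K^2$ times the Lebesgue integral over the same subinterval.

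Summing over the partition then produces the desired inequality with $C=K^2$. The main technical nuisance I anticipate is the partial boundary subintervals, and in particular the degenerate case $b-a<1$, where the partition argument in its naive form breaks down because the length of the subinterval enters the lower bound for the Lebesgue-integral side. I would handle this by merging any short piece into an adjacent unit subinterval, effectively working with a partition whose pieces have length between $1$ and $2$; the same ratio argument carries through up to a slightly larger constant. The final constant $C$ depends only on $F$ through $K$, and not on $a$, $b$ or $x$.
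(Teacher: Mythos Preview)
Your proposal is correct and follows essentially the same route as the paper: a unit-scale partition of $[a,b]$ together with the uniform ratio bound $\oo{F}(y)\le K\,\oo{F}(y+1)$ extracted from long-tailedness, applied once to each factor on every subinterval to pass from the $F(dy)$-integral to the Lebesgue integral with constant $C=K^2$. The only cosmetic difference is that the paper divides $[a,b]$ into $s=[b-a]+1$ equal pieces of length at most $1$ rather than unit intervals, and it likewise glosses over the degenerate case $b-a<1$ that you flag.
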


\def\proofname{Proof}

\begin{proof}
Let $y_0 = a$, $s = [b-a]+1$ and $y_{i} = y_{i-1} + (b-a)/s, \quad i = 1,2, \ldots, s$.  Then $y_s=b$.

There exists a constant $C$ such that for any  $ y > 0$,  $\frac{\oo{F}(y)}{\oo{F}(y +1 )} \leq \sqrt{C} < \infty $ since $F$ is long-tailed.

Then
\begin{align*}
\int_a^b \oo{F}(x-y)F(dy) & = \Sigma_{n=0}^{s-1} \int_{y_n}^{y_{n+1}} \oo{F}(x-y)F(dy)\\
	& \leq  \Sigma_{n=0}^{s-1} \int_{y_n}^{y_{n+1}} \oo{F}(x-y_{n+1})(\oo{F}(y_n)-\oo{F}(y_{n+1}))dy\\
		& \leq \Sigma_{n=0}^{s-1} \int_{y_n}^{y_{n+1}} \sqrt{C} \oo{F}(x-y) \oo{F}(y_n) dy\\
	&	\leq \Sigma_{n=0}^{s-1} \int_{y_n}^{y_{n+1}} C\oo{F}(x-y) \oo{F}(y) dy\\
	& = C \int_a^b \oo{F}(x-y) \oo{F}(y) dy.
\end{align*}
\end{proof}

\def\proofname{Proof of Proposition \ref{haz}}
\begin{proof}
Without loss of generality we may assume that $x_0=0$.  Clearly \eqref{hazard} implies that condition (D3ii) holds.  Since $g$ is concave, the minimum of the sum $g(x-y)+g(y)$ on the interval $[h(x),x-h(x)]$ occurs at the endpoints of the interval.  From the lemma, there exists a constant $C>0$ such that
\begin{align*}
\int_{h(x)}^{x-h(x)} \oo{F}(x-y)F(dy) & \leq C  \int_{h(x)}^{x-h(x)} \oo{F}(x-y)\oo{F}(y)dy\\
& = C \int_{h(x)}^{x-h(x)} \exp(-(g(x-y)+g(y)))dy\\
& \leq C x \exp(-(g(h(x))+g(x-h(x))))\\
& = Cx \oo{F}(h(x))\oo{F}(x-h(x)),
\end{align*}
and so
\begin{align*}
r(x) \int_{h(x)}^{x-h(x)} \frac{\oo{F}(x-y)}{\oo{F}(x)}F(dy) & \leq C x r(x) \oo{F}(h(x)) \frac{\oo{F}(x-h(x))}{\oo{F}(x)}\\
&  = o(1).
\end{align*}
Therefore condition (D3iii) also holds.
\end{proof}

\def\proofname{Proof}

\section{The Boundary Class}

The conditions (D3i), (D3ii) and (D3iii) depend on being able to choose a suitable little-$h$ function in $\mathpzc{h}$ .  There is a problem in that being unable to find a suitable function $r(x)$ and bounding sets $B_i(x)$ for a particular little-$h$ function does not mean that these objects cannot be found for some other little-$h$ function.  The class $\mathpzc{h}$ is usually very rich, and so it may be difficult to find a suitable little-$h$ function.  However, if $h_1(x)$ belongs to the appropriate class of functions and satisfies the three conditions in (D3), and $h_2$, belonging to the same class, is such that $h_2(x) > h_1(x)$ for all $x>x_0$, for some $x_0$, then $h_2(x)$ also satisfies the three (D3) conditions.

This property of the (D3) conditions, which we refer to as an increasing function property (defined precisely below), allows us to construct a boundary class of functions.  Functions in the boundary class do not satisfy the little-$h$ condition, but any function that is asymptotically negligible with respect to any function in the boundary class will be in $\mathpzc{h}$.  We show that all functions in the boundary class are weakly equivalent (written $H_1(x) \asymp H_2(x)$, see notations in Section 6.1).  This means that the boundary class can be generated by a single function $H(x)$, and all multiples of $H(x)$.

We will show that if the (D3) conditions are satisfied for all multiples of $H(x)$ then they are satisfied for some $h(x)$ belonging to the little-h class of functions, \textit{without having to find the little-h function}.  A generator for the boundary class is usually easy to find, and almost trivial for absolutely continuous distributions.

First we define precisely what we mean by an increasing (or decreasing) function property.

\begin{definition}
A property depending on a function $h$, where $h$ belongs to some class of functions, is said to be an increasing (decreasing) function property if when the property is satisfied for $h_1$ then it is satisfied by any other function $h_2$ in the class such that $h_2(x) > h_1(x)$, ($h_2(x) < h_1(x)$), for all $x>x_0$, for some $x_0$.
\end{definition}

We observe that the property of long-tailedness \eqref{long-tail} is a decreasing function property.  We want to describe its upper boundary, when it exists.  First, it is convenient to give a name to the space of functions that the little-$h$ function may lie in.  Let $\mathpzc{h}$ be the space of non-decreasing functions $h(x)$ defined on the positive reals such that $0<h(x)<x/2$ and $h(x) \to \infty$ as $x \to \infty$.

\begin{definition}
The \emph{boundary class} (for $F$), $\HH$, consists of all continuous, non-decreasing functions $H(x)$ such that $h(x) \in \mathpzc{h}$ satisfies the long-tail property \eqref{long-tail} (for $F$) if and only if $h(x) = o(H(x))$.
\end{definition}

\begin{remark}  In most cases the boundary class does exist, however we note that slowly varying functions do not possess a boundary class as all functions in $\mathpzc{h}$ can act as little-$h$ functions satisfying \eqref{long-tail}.
\end{remark}

We examine the structure of the boundary class $\HH$, and show that all functions in $\HH$ are weakly tail equivalent.

\begin{proposition}
Let $H_1(x)$ belong to the boundary class $\HH$.  Then $H_2(x) \in \HH$ if and only if $H_2(x) \asymp H_1(x)$.
\end{proposition}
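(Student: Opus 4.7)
The plan is to establish the two directions separately. For $(\Leftarrow)$, assume $H_2 \asymp H_1$, so $H_1/H_2$ is bounded between two positive constants and hence $h = o(H_1)$ if and only if $h = o(H_2)$ for any function $h$. Combined with $H_1 \in \HH$, this makes the long-tail property \eqref{long-tail} equivalent to $h = o(H_2)$ for every $h \in \mathpzc{h}$; since $H_2$ is continuous and non-decreasing by assumption, we conclude $H_2 \in \HH$.

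For $(\Rightarrow)$, I would argue by contradiction. Suppose $H_1, H_2 \in \HH$ but $H_1 \not\asymp H_2$; by symmetry we may assume $\limsup_{x \to \infty} H_1(x)/H_2(x) = \infty$. The strategy is to construct $h \in \mathpzc{h}$ which is $o(H_1)$ --- so, by $H_1 \in \HH$, long-tailed --- but not $o(H_2)$ --- so, by $H_2 \in \HH$, not long-tailed --- yielding the required contradiction.

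The concrete construction: pass to a subsequence $x_n \uparrow \infty$ with $H_1(x_n)/H_2(x_n) \ge 2^n$, and define the non-decreasing step function $h(x) = H_2(x_n)$ for $x \in [x_n, x_{n+1})$. Monotonicity of $H_1$ yields $h(x)/H_1(x) \le H_2(x_n)/H_1(x_n) \le 2^{-n}$ on each interval, so $h = o(H_1)$; and $h(x_n)/H_2(x_n) = 1$ shows $h \not= o(H_2)$. The main obstacle is ensuring $h(x) < x/2$, so that $h$ actually lies in $\mathpzc{h}$. I would address this via the non-degeneracy of the boundary class (cf.~the remark on slowly varying distributions): the existence of some $h^* \in \mathpzc{h}$ with $h^* \not= o(H_2)$, combined with the bound $h^*(x) < x/2$, forces $\liminf_{x \to \infty} H_2(x)/x < \infty$. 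By refining the choice of $\{x_n\}$ --- and, if needed, replacing $H_2(x_n)$ by $\min(H_2(x_n), x_n/3)$ in the definition of $h$ --- one obtains $h \in \mathpzc{h}$ satisfying both $h = o(H_1)$ and $h \not= o(H_2)$, completing the argument.
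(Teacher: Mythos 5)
Your construction is essentially the paper's own: the paper likewise fixes a sequence $x_n$ along which the ratio of the two boundary functions degenerates, builds a test function from the values $H_2(x_n)$ (piecewise-linear interpolation rather than your step function --- an immaterial difference, and your monotonicity bound $h(x)/H_1(x)\le H_2(x_n)/H_1(x_n)\le 2^{-n}$ is if anything cleaner than the paper's interpolation step), and then plays the two directions of the definition of $\HH$ against each other exactly as you do; the $(\Leftarrow)$ direction is dismissed as clear there as well.

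The one point where you go beyond the paper is the membership requirement $h(x)<x/2$ for $h\in\mathpzc{h}$ (the paper's proof never addresses it), and that is precisely where your patch has a genuine gap. Non-degeneracy gives you some $h^*\in\mathpzc{h}$ with $h^*\neq o(H_2)$, hence $H_2(w_k)\le h^*(w_k)/c<w_k/(2c)$ along \emph{some} sequence $w_k$; but your step function needs $H_2(x_n)=O(x_n)$ along the \emph{same} sequence $x_n$ on which $H_1(x_n)/H_2(x_n)\ge 2^n$, and nothing in the hypotheses ties the two sequences together. The fallback $h(x)=\min\bigl(H_2(x_n),x_n/3\bigr)$ does keep $h\in\mathpzc{h}$ and $h=o(H_1)$, but if $H_2(x_n)/x_n\to\infty$ along the chosen sequence then $h(x_n)/H_2(x_n)\le x_n/(3H_2(x_n))\to 0$, so you lose exactly the property $h\neq o(H_2)$ that produces the contradiction; the phrase ``refining the choice of $\{x_n\}$'' is where the real work would have to be done, and it is not supplied. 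The clean way to close this, consistent with how the paper actually uses $\HH$ (in its examples and later propositions the boundary class is always taken in the normalized form $\{cH(x): cH(x)<x/2\}$), is to work with boundary functions satisfying $H(x)<x/2$ (or at least $H(x)=O(x)$): then $h(x)=H_2(x_n)\le H_2(x)<x/2$ automatically and your argument is complete with no cap needed. As written, however, the step intended to secure $h\in\mathpzc{h}$ does not go through.
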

\begin{proof}
Clearly, if $H_2(x) \asymp H_1(x)$ then $H_2(x) \in \HH$.
So, consider a function $H_2(x)$ for which $\liminf \frac{H_2(x)}{H_1(x)} = 0$.  We shall construct a function $h_1(x)$ with the long-tail property \eqref{long-tail} which is not $o(H_2(x))$.  There exists a sequence, tending to infinity, $0=x_0<x_1<\ldots$ with
$ \epsilon_n \defn \frac{H_2(x_n)}{H_1(x_n)}$
such that $\lim_{n \to \infty} \epsilon_n =0$.

By the continuity of $H_1(x)$ and $H_2(x)$ we can find a sequence of points $0<y_1<x_1 <y_2\ldots$ such that $0<\frac{H_2(x)}{H_1(x)} < 2\epsilon_n$ for all $y_n \leq x < x_n$.

For $n \geq 1$, define 
\[ h_1(x) = \left\{ \begin{array}{cc} H_2(x_{n-1}) & \for x \in [x_{n-1},y_n),\\ 
				\frac{x_n-x}{x_n-y_n}H_2(x_{n-1})+\frac{x-y_n}{x_n-y_n}H_2(x_{n}) &  \for x \in [y_n, x_n).
\end{array} \right. \]

Now, $h_1(x)=o(H_1(x)$ by construction, so $h_1(x)$ satisfies the long-tail property, but $\liminf \frac{h_1(x)}{H_2(x)} = 1$, so $H_2(x)\notin \HH$.
We can clearly repeat this argument if $\liminf \frac{H_1(x)}{H_2(x)} = 0$.
Hence, if $H_2(x) \in \HH$ then $H_2(x)\asymp H_1(x)$.
\end{proof}

\begin{remark}
The three conditions in (D3) depend on a little-$h$ which satisfies \eqref{long-tail}.  The next Proposition will show that if the three conditions (D3) are satisfied by \emph{all} functions in the boundary class $\HH$, then they are satisfied by at least one $h$ in the long-tail class.  Since all functions in $\HH$ are weakly-tail equivalent, it will then be sufficient to show that the conditions (D3) hold for all multiples $\{cH(x); c \in \R^+, cH(x) < x/2\}$ of any particular function $H \in \HH$.  We shall then say that $H(x)$ generates the boundary class $\HH$.
\end{remark}

\begin{proposition}
Let $F$ be a distribution function possessing a boundary class $\HH$.  Then there exists some function $h(x)$, which itself satisfies the long-tail property \eqref{long-tail} if and only if, for any $H(x) \in \HH$, the three conditions in (D3) hold for $\{cH(x); c \in \R^+, cH(x) < x/2\}$ .
\end{proposition}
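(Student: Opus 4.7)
The approach rests on the observation, implicit throughout Section 3, that each of the three subconditions (D3i), (D3ii), (D3iii) is an increasing function property in $h$. Indeed, if $h_1(x) \le h_2(x) \le x/2$ for large $x$, then $\oo{B}_i(h_2(x)) \subseteq \oo{B}_i(h_1(x))$ (since the $B_i$ are increasing in $x$), $\oo{F}(h_2(x)) \le \oo{F}(h_1(x))$, and the integration range $[h_2(x), x-h_2(x)]$ shrinks; thus (D3) for $h_1$ implies (D3) for $h_2$. The remaining clause of (D3), namely $\P(X_i > x|\GG)\1(B_i(x)) \le r(x)\oo{F}(x)\1(B_i(x))$, does not involve $h$ and so is unaffected.

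\textbf{Forward direction.} If some $h^* \in \mathpzc{h}$ satisfies both \eqref{long-tail} and (D3), then for any $H \in \HH$ and any $c>0$ with $cH(x)<x/2$ eventually, the defining property of $\HH$ gives $h^*(x) = o(H(x))$, so $h^*(x) \le cH(x)$ for large $x$, and (D3) for $cH$ follows from the increasing function property.

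\textbf{Backward direction.} Fix any $H \in \HH$ and assume (D3) holds for every admissible multiple $cH$. For each sufficiently large integer $n$, taking $c = 1/n$ and using the uniformity in $i$, there exists $x_n$ such that for all $x \ge x_n$,
\begin{align*}
\sup_i \P(\oo{B}_i(H(x)/n)) &\le \oo{F}(x)/n,\\
r(x)\oo{F}(H(x)/n) &\le 1/n,\\
r(x)\int_{H(x)/n}^{x-H(x)/n}\oo{F}(x-y)F(dy) &\le \oo{F}(x)/n.
\end{align*}
Note that $H \to \infty$, since otherwise $h = o(H)$ would force every $h \in \mathpzc{h}$ to stay bounded, contradicting $h(x) \to \infty$. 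Using this, I enlarge the $x_n$ to arrange additionally that $x_n \uparrow \infty$, $H(x_n)/(n-1) \to \infty$, $H(x_{n+1})/n \ge H(x_n)/(n-1)$, and $H(x_n)/(n-1) \le x_n/2$.

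On each $[x_n,x_{n+1})$ I then define $h(x) := \max\bigl(H(x_n)/(n-1),\; H(x)/n\bigr)$. The conditions on $\{x_n\}$ render $h$ continuous and non-decreasing, with $H(x)/n \le h(x) \le H(x)/(n-1)$ throughout; hence $h \to \infty$, $h(x) < x/2$, and $h(x) = o(H(x))$. The defining property of $\HH$ then gives $h \in \mathpzc{h}$. Finally, the lower bound $h(x) \ge H(x)/n$ together with the increasing function property transfers the three displayed bounds to $h$, establishing (D3). The main obstacle is precisely this monotone interpolation: one needs $h$ simultaneously non-decreasing, asymptotically negligible with respect to $H$, and pointwise at least $H(x)/n(x)$ on each piece so that the uniform bounds transfer via monotonicity, and these three requirements are what dictate the piecewise-maximum formula and the auxiliary growth conditions on $\{x_n\}$.
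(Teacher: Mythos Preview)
Your proof is correct and follows essentially the same strategy as the paper: for the nontrivial backward direction, both you and the authors pick a sequence of shrinking multiples of a fixed $H\in\HH$ (you use $1/n$, they use $2^{-n}$), choose thresholds beyond which the (D3) quantities at those multiples are uniformly small, and then splice together a non-decreasing $h$ that is $o(H)$ yet pointwise dominates the relevant multiple on each piece, so that the (D3) bounds transfer via the increasing-function property. The only difference is cosmetic---the paper alternates segments of the form $c_rH(x)$ with constant plateaus $c_rH(x_r)$ to enforce monotonicity and continuity, whereas you achieve the same with the $\max$ formula---and your write-up is in fact more careful in verifying the auxiliary conditions on $\{x_n\}$.
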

\begin{proof}
Let 
\[\Pi(f(x))=\sup_i \left( \max \left( \frac{\P(\oo{B_i}(f(x)))}{\oo{F}(x)}, r(x) \oo{F}(f(x)), \int_{f(x)}^{x-f(x)} \frac{\oo{F}(x-y)}{\oo{F}(x)} F(dy) \right) \right) .\]
Choose any $H(x) \in \HH$, and let $c_n = 2^{-n}, \quad n \in \N$.

Define an infinite sequence $0=y_1 < x_1 < y_2 < \ldots$ recursively, for $r \in \N$, by
\beqr
& y_1  = 0;\\
& x_r  = \max(y_r+1, \sup_{x>0} \{ x: \Pi(c_rH(x)) > c_r\});\\
& y_{r+1}  = \inf_{x>x_r+1} \{ x: H(x) = 2H(x_r)\}.
\eeqr
By construction this sequence tends to infinity.

For $x \geq 0$, define
\[h(x) = \left\{ \begin{array}{cc} c_rH(x) & \for x \in [y_r,x_r),\\ c_rH(x_r) &  \for x \in [x_r, y_{r+1}).
\end{array} \right. \]
Hence, if conditions (D3) hold for all (sufficiently small) multiples of $H(x)$, then they hold for $h(x)$, which, by construction, is $o(H(x))$.  Conversely, if conditions (D3) hold for some $h(x)$, then they hold for any function $g(x)$ such that $h(x) = o(g(x))$, and hence for all functions in class $\HH$.
\end{proof}

Since the properties in (D3i), (D3ii) and (D3iii) are all increasing function properties, rather than finding a function $h$, it suffices to check them for all multiples of $H$, for any $H \in \HH$, the boundary class.  This is a much easier proposition than identifying a suitable little-$h$ function.

\begin{remark}
Let $F$ be a distribution function such that $\oo{F}(x) =f_1(x)f_2(x)$, where each of $f_1$ and $f_2$ are long-tailed.  Let the boundary class for $f_i$ be $\HH_i$, $i=1,2$ and generated by $H_i(x)$ respectively.  Assume that $H_2(x) = o(H_1(x))$. Then the boundary class for $F$ is $\HH_2$.  If $f_1(x)$ is slowly varying, then the boundary class for $F$ is again $\HH_2$.
\end{remark}

\begin{proposition}
Let $F$ be an absolutely continuous long-tailed distribution function with density $f(x)$ and hazard rate $q(x) = \frac{f(x)}{\oo{F}(x)}$.   Let $H(x) = 1/q(x)$.  Then the boundary class of $F$ is generated by $\{cH(x); c \in \R^+, cH(x) < x/2\}$
\end{proposition}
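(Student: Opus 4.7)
My plan is to rewrite the long-tail property \eqref{long-tail} using the hazard function $Q(x) \defn -\log\oo{F}(x) = \int_0^x q(s)\,ds$. For any $h \in \mathpzc{h}$, \eqref{long-tail} is equivalent to $Q(x) - Q(x-h(x)) = \int_{x-h(x)}^{x} q(s)\,ds \to 0$ as $x \to \infty$. Since every element of $\HH$ must be continuous and non-decreasing, I take $q$ continuous and (eventually) non-increasing, which makes $Q$ concave. Tangent-line estimates for concave functions then deliver the sandwich
\[h(x)\,q(x) \;\leq\; \int_{x-h(x)}^{x} q(s)\,ds \;\leq\; h(x)\,q(x-h(x)),\]
which will drive the whole argument.

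For the ``only if'' half of the boundary-class definition, if $h \in \mathpzc{h}$ satisfies \eqref{long-tail}, the left-hand bound forces $h(x) q(x) \to 0$, i.e., $h = o(H)$. In particular, this already shows that no multiple $cH$ with $c > 0$ (and $cH(x) < x/2$) satisfies \eqref{long-tail}, since for $h = cH$ the lower sandwich bound is identically the constant $c$.

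For the ``if'' half, given $h \in \mathpzc{h}$ with $h = o(H)$, I would use the right-hand sandwich bound and try to show $h(x)\,q(x-h(x)) \to 0$. For any fixed $\epsilon > 0$, eventually $h(x) < \epsilon H(x) = \epsilon/q(x)$, and monotonicity of $q$ then yields $q(x-h(x)) \leq q(x - \epsilon/q(x))$; it remains to show the ratio $q(x - \epsilon/q(x))/q(x)$ stays bounded as $x \to \infty$. This is where the hypothesis that $F$ itself is long-tailed enters: long-tailedness supplies some reference $h_0 \in \mathpzc{h}$ satisfying \eqref{long-tail}, and comparing with $h_0$ via the concavity of $Q$ pins down the ratio. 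Once that bound is in place, the integral is $O(\epsilon)$ for large $x$, and letting $\epsilon \downarrow 0$ gives \eqref{long-tail} for $h$.

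With both directions verified, $H \in \HH$; the description of the entire boundary class as the weak-equivalence class $\{cH(x) : c \in \R^+,\, cH(x) < x/2\}$ is then immediate from the earlier proposition asserting that $\HH$ consists of a single weak-equivalence class. The main obstacle will be the ratio bound in the ``if'' direction; the rest is a routine application of the concavity of $Q$.
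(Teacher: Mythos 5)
Your forward (``only if'') half is sound and is essentially a hazard--function version of what the paper does (the paper instead writes $x-h(x)=\oo{F}^{-1}(\oo{F}(x)(1+o(1)))$ and reads off $h(x)=o(1/q(x))$ from the derivative of $\oo{F}^{-1}$, while you integrate $q$ over $[x-h(x),x]$ and use monotonicity); note though that your sandwich requires $q$ to be eventually non-increasing, an assumption not in the statement, even if it is defensible because members of $\HH$ must be non-decreasing. The genuine gap is exactly the step you flag in the ``if'' half: the boundedness of $q(x-\epsilon/q(x))/q(x)$. This does \emph{not} follow from long-tailedness of $F$ together with concavity of $Q(x)\defn-\log\oo{F}(x)$, and no comparison with a reference $h_0$ satisfying \eqref{long-tail} can produce it, because there are distributions satisfying every hypothesis you use for which it is false. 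Concretely, let $q(x)=1/k!$ for $x\in[k!,(k+1)!)$, $k\ge 1$ (and $q\equiv 1$ on $[0,1)$): then $F$ is absolutely continuous, $Q$ is concave, $q(x)\to 0$ so $F$ is long-tailed, yet for $x$ just above $k!$ the point $x-\epsilon/q(x)=x-\epsilon\,k!$ lies in $[(k-1)!,k!)$, so $q(x-\epsilon/q(x))/q(x)=k\to\infty$.

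Moreover the gap is not merely an artefact of your particular estimate: for this $F$ the implication ``$h=o(1/q)$ implies \eqref{long-tail}'' itself fails, so it cannot be rescued by a cleverer bound under the stated hypotheses. Put $x_k=k!+(k-1)!$ and $h(x)=2(k-1)!$ for $x\in[x_k,x_{k+1})$ (modified in the obvious way on a bounded initial interval so that $h(x)<x/2$). Then $h\in\mathpzc{h}$ and $\sup_{x\ge x_k}h(x)q(x)\le 2/k\to 0$, so $h(x)=o(1/q(x))$; but $[x_k-h(x_k),x_k]\supset[k!-(k-1)!,k!]$, on which $q\equiv 1/(k-1)!$, hence $Q(x_k)-Q(x_k-h(x_k))\ge 1$ and $\oo{F}(x_k-h(x_k))/\oo{F}(x_k)\ge e$ along $x_k\to\infty$, so \eqref{long-tail} fails for this $h$. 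What is needed to close the ``if'' direction is an additional regularity property of the hazard rate, of the type $q(x-t/q(x))\sim q(x)$ uniformly for bounded $t$ (which holds in all the paper's examples: regularly varying, Weibull and lognormal-type tails); with such an assumption your sandwich finishes the proof in one line. To be fair, the paper's own proof dismisses this converse with ``it is easy to show'', i.e.\ it tacitly relies on the same kind of smoothness; but as a self-contained argument your proposal is missing precisely this ingredient, so the ``if'' half stands as a genuine gap rather than the routine verification you anticipate.
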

\begin{proof}
By the long-tailed property \eqref{long-tail} as $x \to \infty$ and for appropriate $h(x)$ we know that $\oo{F}(x-h(x))=\oo{F}(x) + o(\oo{F}(x))$.  Hence, $x-h(x)=\oo{F}^{-1}(\oo{F}(x)(1+o(1)))$, and since  $\oo{F}^{-1}$ has a derivative at all points in its domain, $h(x)=o(\oo{F}(x)(-\oo{F}^{-1})'(\oo{F}(x)))$, where the negative sign has been introduced to make the function inside the little-$o$ positive.  However, $\oo{F}(x)(-\oo{F}^{-1})'(\oo{F}(x)) = 1/q(x)$.

Conversely, if $h(x)=o(1/q(x))$ then it is easy to show that the long-tailed property \eqref{long-tail} holds.
\end{proof}

We give some examples of calculating the Boundary class.

\begin{itemize}
\item[3.1. ] Let $\oo{F}(x) = l(x)x^{-\alpha}$, $x>1$, where $l(x)$ is slowly varying and $\alpha > 0$.  The boundary class for $f_1(x)=l(x)$ is the whole space of functions.  For $f_2(x)=x^{-\alpha}$ we have $q_2(x)=\alpha/x$.  Hence $\HH=\{cx; 0<c<1/2\}$.\\
\item[3.2. ] Let $\oo{F}(x)=\exp(-\gamma x^{\beta})$, $x>0$, where $0<\beta<1$.  Then $q(x)=\gamma \beta x^{-1+\beta}$, and $\HH=\{cx^{1-\beta};c>0\}$.\\
\item[3.3. ] Let $\oo{F}(x)= f_1(x)\exp(-\gamma (\log(x))^{\alpha}) \defn f_1(x)f_2(x)$, $x>1$, where $\alpha \geq 1$ and $f_2(x)=o(f_1(x))$.  We note that this class of functions includes regular variation as in Example 3.1. above, and also log-normal.  We need only consider $f_2(x)$.  We then find that $\HH=\{cx(\log(x))^{1-\alpha};c>0\}$.
\end{itemize}

\section{Examples of Conditionally Independent Subexponential Random Variables}

\subsection{Example 1} Let $\xi_i$, $i=1,2, \ldots, n$, be i.i.d. with common distribution function $F_{\xi} \in R_{-\alpha}$ (for definition of the class of regularly varying distributions of degree $\alpha$ see Section 6.2).  Let $\eta$ be independent of the $\xi_i$ and have distribution function $F_{\eta} \in R_{-\beta}$, where $\alpha \neq \beta$.  Define $X_i=\xi_i+\eta$ for $i=1,2, \ldots, n$, and let the reference distribution be $\oo{F}(x) = x^{-(\alpha \wedge \beta)}$.  Then, from well-known properties of independent subexponential random variables, we have, for $i=1,2, \ldots, n$,
\[\P(X_i>x) \sim \oo{F}(x).\] 

Conditional on the sigma algebra $\GG= \sigma(\eta)$, the $X_i$ are independent.

For our reference distribution, the boundary class is $\HH=\{cx,c>0\}$.
Now, the random variables $\frac{\P(X_i>x|\GG)}{\oo{F}(x)} \leq \frac{1}{\oo{F}(x)}$ are unbounded as $x \to \infty$.  If we try to satisfy the condition \eqref{Conditional} without using the bounded sets $B(x)$, we need to ensure that, for all $x>0$, almost surely,
\[\frac{\P(X_i>x|\GG)}{\oo{F}(x)} \leq r(x).\]
If we take $r(x) = \frac{1}{\oo{F}(x)}$, then condition (D3ii) is not satisfied, since for any $c>0$,
\[r(x)\oo{F}(H(x))= c^{-(\alpha \wedge \beta)}.\]

Hence, we need to use bounded sets.  Let $B(x)=\{\eta \leq x/2\}$.  This satisfies condition (D3i), if and only if $\alpha < \beta$:  for any $c>0$,
\[\frac{\P(\oo{B}(H(x))}{\oo{F}(x)}=\frac{ \P(\eta>cx/2)}{\oo{F}(x)}=(2/c)^\beta x^{(\alpha \wedge \beta)-\beta}.\]
Clearly, in the case $\alpha < \beta$, we may take $r(x)$ as a constant, $r(x)=2^{\beta}$.

The condition $\alpha < \beta$ agrees with arguments on $X_i$ taken from the standard theory of independent subexponential random variables.

\subsection{Example 2} Let $\eta$ be a random variable with uniform distribution in the interval$(1,2)$.  Conditional on $\GG= \sigma(\eta)$ let $X_i$, $i=1,2, \ldots, n$, be i.i.d. with common distribution function 
\[\oo{F}_{\xi|\eta}(x) = (1+x)^{-\eta}, \quad x > 0.\]

Routine calculations show that
\[\P(X_i>x) \sim \frac{1}{x\log(1+x)} \equiv \oo{F}(x),\]
where $F$ is our reference distribution.  The boundary class is again $\HH=\{cx,c>0\}$.

For all $x>1$ we have, almost surely,
\begin{align*}
\frac{\P(X_i>x|\GG)}{\oo{F}(x)} & \leq \frac{\P(X_i>x|\eta=1)}{\oo{F}(x)}\\
& = (1+1/x)\log(1+x)\\
& \leq r(x) \equiv 2 \log(1+x)
\end{align*}
Routine calculations show that, for all $0<c<1/2$ condition (D3iii) is satisfied, and also condition (D3ii).

In this example there has been no need to define bounding sets, or equivalently we can take $B(x) = \Omega$ for all $x>0$.

\subsection{Example 3} In this example we again consider a Bayesian type situation.  Let $X_i$, $i=1,2, \ldots, n$ be identically distributed, conditionally independent on parameter $\beta$, with conditional distribution $F_{\beta}$ given by
\[\oo{F}_{\beta}(x) = \exp(-\gamma x^{\beta}), \quad \gamma >0\]
where $\beta$ is drawn from a uniform distribution on $(a,b)$, $0 \leq a < 1$, $a<b$.
The unconditional distribution of $X_i$, $F_X$, is then
\[\oo{F}_X(x)= \frac{1}{(b-a)\log x} \((E_1(\gamma x^a)-E_1(\gamma x^b)\)),\]
where $E_1(x) = \int_x^{\infty} \frac{e^{-u}}{u}du.$

We now consider separately the two cases (i) $0<a<1$ and (ii) $a=0$.  We start first with case (i).

We find 
\[\oo{F}_X(x) \sim \oo{F}(x) \defn \frac{\exp(- \gamma x^a)}{(b-a)\gamma x^a \log x}.\]
The boundary class $\HH$ is the same as for the Weibull distribution, 
\[\HH=\{cx^{1-a}, \: c>0\}.\]
We shall take $B(x) = \Omega$ for all $x>0$, and
\[r(x)=\gamma (b-a)x^a \log x.\]
Note that $g(x) = -\log\((\oo{F}(x)\)) = \log\((\gamma(b-a)\)) + x^a + a \log x + \log \log x$ is convex, and that
\[x r(x) \oo{F}(H(x)) = \frac{x^{1-a^2} \log(x)\exp\((-\gamma c^a x^{a(1-a)}\))}{ c^a \log(cx)} \to 0\]
as $x \to \infty$ for all $c>0$.
Hence, by Lemma \ref{haz}, the conditions in (D3) are met, and the principle of the single big jump holds.

W now consider case (ii), with $\beta$ distributed uniformly on the interval $(0,b)$.  The reference distribution is now
\[\oo{F}(x) = \frac{E_1(1)}{b \log x} \defn \frac{k}{\log x},\]
where $E_1(1) \approx 0.21938 \ldots$.  

Since $\oo{F}$ is slowly varying there is no Boundary class, but the class of functions satisfying the long-tailed property \eqref{long-tail} is $\{h(x)=O(x)\}$. Therefore, in order to satisfy (D3ii) we need to choose $r(x)$ such that $\lim_{x \to \infty} \frac{r(x)}{\log x} = 0$.
For the bounded sets $B(x)$, the problems clearly occur near $\beta=0$, so we may try sets of the form $B(x)=\{ \beta \in (a(x),b)\}$, with $a(x) \to 0$ as $x \to \infty$. 

To satisfy \eqref{Conditional} we need, for each $x>0$ and for $\eta \in (a(x),b)$,
\[\exp(-\gamma x^{a(x)}) < \exp(-\gamma x^{\beta}) \leq \frac{k r(x)}{\log(x)},\]
which cannot be true if both $a(x) \to 0$ and $\frac{r(x)}{\log x} \to 0$ as $x \to \infty$.

Hence the conditions of (D3) cannot be met.

The question now arises whether, in this case, the principle of the single big jump still holds.  The answer is no.  To see why, we again consider the representation (1), for the sum of two independent identically distributed subexponential random variables $X_1,X_2$.  For simplicity we shall consider the case where $b=\gamma=1$.

Considering the representation in (1), we have
\[P_1(x)= \int_0^1 d \beta \int_0^x \beta u^{\beta-1} \exp(-u^{\beta}) du \int_0^x \beta v^{\beta-1} \exp(-v^{\beta}) dv \1(u+v>x).\]
Making the substitution $u=xy, v=xz$, we have
\begin{align*}
P_1(x)= & \int_0^1 d \beta x^{2 \beta} \int_0^1 \int_0^1 \beta^2 y^{\beta-1} z^{\beta-1}\exp(-x^{\beta}(y^{\beta}+z^{\beta})) \1(y+z>1)dydz\\
\leq & \int_0^1 d \beta x^{2 \beta} \exp(-x^{\beta}) \int_0^1 \int_0^1 \beta^2 y^{\beta-1} z^{\beta-1} \1(y+z>1)dydz\\
= & \int_0^1 d \beta x^{2 \beta} \exp(-x^{\beta}) J(\beta),
\end{align*}
where $J(\beta) = \P(Y_1^{1/\beta} + Y_2^{1/\beta}>1)$ and $Y_1,Y_2 \sim U(0,1)$ are i.i.d..
As $\beta \to 0$, $J(\beta) \to 0$, so for any $\delta>0$ there exists $\epsilon>0$ such that $J(t) \leq \delta$ for all $t \leq \epsilon$.  Hence,
\begin{align*}
P_1(x) \leq & \left( \int_0^{\epsilon} + \int_{\epsilon}^1\right) d \beta x^{2 \beta} \exp(-x^{\beta}) J(\beta)\\
\leq & \int_0^{\epsilon} d \beta x^{2 \beta} \exp(-x^{\beta}) \delta + o(\exp(-x^{\epsilon/2})).
\end{align*}
But
\begin{align*}
 \int_0^{\epsilon} d \beta x^{2 \beta} \exp(-x^{\beta}) \leq & \int_0^1 d \beta x^{2 \beta} \exp(-x^{\beta})\\
= & \frac{1}{\log(x)} \int _1^{\infty} t \exp(-t)dt \\
= & \frac{1}{2\log(x)}.
\end{align*}
Therefore
\[P_1(x) =o(\oo{F}(x)),\]
and the principle of the big jump holds.
However,
\[P_2(x)= \int_0^1 \exp(-2x^{\beta})d \beta = \frac{1}{\log(x)}\int_2^{2x} \frac{e^{-u}}{u}du\]
so that
\[\P(X_1>x,X_2>x) \sim \frac{E_1(1)}{E_1(2)}\P(X_1>x).\]
Hence, the principle of the \textit{single} big jump does not hold.  We note that this rsult is related to Theorem 2.2 in \cite{AAK}.

\subsection{Example 4}

For $i = 1,2,\ldots,n$ let $X_i = \xi_i \eta_1 \eta_2 \ldots \eta_i$, where the $\{\xi_i\}$ are i.i.d, and the $\{\eta_i\}$ are i.i.d. and independent of the $\{\xi_i\}$.  Then conditional on the $\sigma$-algebra generated by $\{\eta_1, \ldots,\eta_n\}$ the $\{X_i\}$ are independent.  Let the $\{\xi_i\}$ have common distribution function $F$ in the intermediately regularly varying class, $F_{\xi} \defn F \in \textrm{IRV}$, (see Section 6 for definition), and let the $\{\eta_i\}$ have common distribution function $F$ that is rapidly varying, $F_{\eta_1} \in \RR_{- \infty}$, (see Section 6 for definition) .  This is related to the example given in \cite{LGH}.  In their example the $\{\xi_i\}$ were chosen to belong to the class $\DD \cap \LL$ (again, see Section 6).  We have chosen the slightly smaller class of intermediate regular variation because:
\begin{enumerate}
\item examples which lie in the $\DD \cap \LL$ class that do not lie in the IRV class are constructed in an artificial manner;\\
\item the IRV class of functions has a common boundary class, and hence is suitable for general treatment under our methodology.
\end{enumerate}
The boundary class for $F$ is $\HH = \{cx, 0<c<1/2\}$.

By Lemma 6.1 the class  $\RR_{- \infty}$ is closed under product convolution, hence for each $i = 1,2,\ldots,n$ we have $X_i$ is of the form $X_i = \xi_i \eta$ where the d.f. of $\eta$, $F_{\eta} \in \RR_{-\infty}$.  Then by Lemma 6.2 each $X_i$ has d.f. $\oo{F}_{X_i}(x) \asymp  \oo{F}(x)$.

As we noted in Remark 2.3, the results of our propositions follow through with the asymptotic condition in (D2) replaced with weak equivalence.

We now proceed to the construction of the bounding sets, $B(x)$.  For \eqref{Conditional} to hold we need to restrict the size of $\eta$.  By Lemma 6.3 we can choose $\epsilon>0$ such that $\oo{F_{\eta}}(x^{1-\epsilon}) = o(\oo{F}(x))$. For such an $\epsilon$  we choose $B(x) = \{\eta \leq x^{1-\epsilon}\}$.  Then for any $H(x)= cx \in \HH$, $0<c<1/2$, condition (D3i) requires
\[ \P(\oo{B}(H(x)) = \oo{F}_{\eta}((cx)^{1-\epsilon}) = o(\oo{F}_{\eta}(x^{1-\epsilon}) = o(\oo{F}(x)),\]
as required.

Now consider \eqref{Conditional}:
\[ \P(X_i>x|\GG) \1 (B(x)) \leq r(x) \oo{F}(x) \1 (B(x)).\]
This implies that the choice for $r(x)$ satisfies
\[ \frac{\P(\xi_i>x/\eta | \eta \leq x^{1-\epsilon})}{\oo{F}(x)} \leq \frac{\P(\xi_i > x^{\epsilon})}{\oo{F}(x)} = \frac{\oo{F}(x^{\epsilon})}{\oo{F}(x)} \leq r(x).\]

Taking $r(x)=\frac{\oo{F}(x^{\epsilon})}{\oo{F}(x)}$, for any $H(x)= cx \in \HH$,
\[r(x)\oo{F}(H(x)) = \frac{\oo{F}(x^{\epsilon})}{\oo{F}(x)}\oo{F}(cx) = o(1),\]
and
\[r(x) \int_{cx}^{(1-c)x} \frac{\oo{F}(x-y)}{\oo{F}(x)} F(dy) \leq \frac{\oo{F}(x^{\epsilon})\oo{F}(cx)}{\oo{F}^2(x)}\oo{F}(cx) = o(1).\]

Hence all the conditions of (D3) are met, and the principle of the single big jump holds; that is:
\[ \P(X_1+ \cdots X_n > x) \sim \sum_{i=1}^n \oo{F}_{X_i}(x).\]

\begin{remark}
If, in addition, $F$ is continuous, then Theorem 3.4 (ii) of \cite{CS} shows that the restriction $F_{\eta} \in \RR_{- \infty}$ can be eased to $\oo{F}_{\eta} =o(\oo{F})$.
\end{remark}

\subsection{Example 5}
Let $\{Z_i\}_{1 \leq i \leq n}$ be independent and normally distributed $Z_i \sim N(\mu_i,\sigma_i^2)$ and let $W$ be independent of $\{Z_i\}$ with standard normal distribution $W \sim N(0,1)$.  Let $Y_i = s_iW+ Z_i$ with $s_i \in \RR, \max_i |s_i | \defn s$.  Let $\eta=e^W$, $\eta_i=e^{s_iW}$, $\xi_i= e^{Z_i}$ and $X_i=e^{Y_i}=\eta_i \xi_i$.  We can think of the $X_i$ as dependent risks, dependent through a response to the market determined by $W$. It is straightforward that $X_i \sim LN(\mu_i, s_i^2+\sigma_i^2)$.  

Let  $\mu = \max_i \mu_i$, $\sigma_Z^2 = \max_i \sigma_i^2$, and $\sigma^2 = s^2 + \sigma_Z^2$.  

Let the reference distribution $F$ have  the $LN(0, \sigma^2)$ distribution, and note that asymptotically $\oo{F}(x) \sim \frac{\sigma}{\sqrt{2\pi}\log(x)} \exp\((- \frac{1}{2 \sigma^2}(\log(x)-\mu)^2\))$ and that the boundary class $\HH$ is generated by $H(x) = x/\log x$.  

Also, let $F_{\xi}$ be the $LN(\mu, \sigma_Z^2)$ distribution. Then if  $X_i$ is such that $\sigma_i = \sigma_Z, \mu_i = \mu$ then, in condition (D2) $c_i = 1$, otherwise $c_i=0$.

The $X_i$ have a lognormal distribution with Gaussian copula, as studied in \cite{ANR}.  From our perspective we choose $\GG$ to be the sigma algebra generated by $\eta$, and dependent on this the $X_i$ are conditionally independent.

Bounding sets will need to be defined since, for instance, if $\eta$ is too large, then any $X_i$ for which $s_i>0$ will also be large.  We choose, for each $i$,
\[B_i(x) = \left\{ \begin{array}{cc} \{\eta \leq x^{1- \delta}\}, & s_i>0,\\
																			\Omega, & s_i = 0,\\
																			\{\eta \geq x^{-1+ \delta}\}, & s_i<0,
\end{array} \right. \]
where $1 > \delta > \max\(( s / \sigma, 1 - \sigma_Z^2 / \sigma^2\))$.  The condition that $\delta >  s / \sigma $ ensures that $\P(\oo{B_i}(H(x)))) = o(\oo{F}(x))$ for all $i$.

We choose the bounding function $r(x) = \frac{\oo{F_{\xi}}(x^{1-\delta})}{\oo{F}(x)}$, and the condition $\delta > 1 - \sigma_Z^2 / \sigma^2$ ensures that $r(x)$ is eventually monotonically increasing.

The conditions for Proposition 2.3 are easily checked, and hence all (D3) conditions are met.  The principle of the single big jump holds and
\[\P(X_1+ \cdots + X_n>x) \sim \((\sum_{i=1}^n c_i\))\oo{F}(x),\] 
where $\sum_{i=1}^n c_i$ counts the number of the $X_i$ that have the heaviest lognormal distribution.

\section{Real-valued Random Variables}

We wish to extend our investigation beyond non-negative random variables to conditionally independent subexponential random variables taking real values.  In order to deal with this situation we need to add another condition to those enumerated in Section 2 at (D1), (D2) and (D3). Again we let $F$ be a reference subexponential distribution, and $h$ be a function satisfying \eqref{long-tail}.
\begin{enumerate}
\item[(D4)] For each $i,j \geq 1$ we have that
\[\P\((X_i>x+h(x), X_j \leq -h(x)\))=o(\oo{F}(x)).\]
\end{enumerate}
We then have the following extension of Proposition 2.1.

\begin{proposition}\label{Genrv}
Let $X_i$, $i=1,2, \ldots$ be real-valued random variables satisfying conditions (D1), (D2), (D3) and (D4) for some subexponential $F$ concentrated on the positive half-line and for some $h(x)$ satisfying \eqref{long-tail}.  Then
\[\P(X_1+ \cdots + X_n>x) \sim \sum_{i=1}^n \P(X_i>x) \sim \((\sum_{i=1}^n c_i\))\oo{F}(x).\] 
\end{proposition}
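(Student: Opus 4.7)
The plan is to reduce Proposition~\ref{Genrv} to Proposition~\ref{nSum} by bounding $S_n = X_1 + \cdots + X_n$ above by the sum of positive parts (an immediate step) and below by an explicit single big jump event, which is where condition (D4) enters.

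For the upper bound, I would observe that $S_n \leq \sum_i X_i^+$ where $X_i^+ = \max(X_i, 0)$. Since $X_i^+$ is a measurable function of $X_i$, the collection $\{X_i^+\}$ remains conditionally independent on $\GG$, so (D1) is inherited. For $x > 0$ we have $\P(X_i^+ > x) = \P(X_i > x) \sim c_i \oo{F}(x)$ and $\P(X_i^+ > x \mid \GG) = \P(X_i > x \mid \GG)$ almost surely, so (D2) and (D3) also transfer with the same constants $c_i$, function $r$, and bounding sets $B_i$. Proposition~\ref{nSum} applied to $\{X_i^+\}$ then gives
\[\P(S_n > x) \leq \P\bigl(\textstyle\sum_i X_i^+ > x\bigr) \sim \bigl(\textstyle\sum_i c_i\bigr)\oo{F}(x).\]

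For the matching lower bound, for each $i$ I would set
\[A_i \defn \{X_i > x + (n-1)h(x)\} \cap \bigcap_{j \neq i}\{X_j > -h(x)\},\]
so that $A_i \subseteq \{S_n > x\}$, and apply Bonferroni:
\[\P(S_n > x) \geq \sum_{i=1}^n \P(A_i) - \sum_{i<k}\P(A_i \cap A_k).\]
Each $\P(A_i)$ is bounded below by $\P(X_i > x + (n-1)h(x)) - \sum_{j \neq i}\P(X_i > x + h(x), X_j \leq -h(x))$, where the leading term equals $c_i\oo{F}(x)(1 + o(1))$ by (D2) and the long-tailedness of $F$, while each subtracted term is $o(\oo{F}(x))$ directly by (D4). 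The cross terms satisfy $\P(A_i \cap A_k) \leq \P(X_i > x, X_k > x) = o(\oo{F}(x))$, a bound already derived in the proof of Proposition~\ref{nSum} from (D1) and (D3).

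The main obstacle I expect is the careful calibration of thresholds in the definition of $A_i$. The positive threshold $x + (n-1)h(x)$ must be large enough that $\bigcup_i A_i$ really sits inside $\{S_n > x\}$ (this is what forces the $(n-1)h(x)$ offset), yet small enough that the leading probability $\P(X_i > x + (n-1)h(x))$ is still asymptotic to $c_i \oo{F}(x)$; meanwhile the negative threshold $-h(x)$ must match the one in (D4) so that the error terms can be absorbed into $o(\oo{F}(x))$. Once these choices are made, combining the two bounds yields $\P(S_n > x) \sim (\sum_i c_i)\oo{F}(x)$, and the intermediate equivalence to $\sum_i \P(X_i > x)$ is immediate from (D2).
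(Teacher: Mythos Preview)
Your proposal is correct. It differs from the paper's own argument in two respects. For the upper bound, the paper keeps the decomposition of Proposition~\ref{nSum} unchanged (noting that the event $\{X_1+X_2>x,\ X_1\le x-h(x),\ X_2\le x-h(x)\}$ still forces $h(x)<X_1\le x-h(x)$ even for signed $X_i$), whereas you pass to positive parts and invoke Proposition~\ref{nSum} wholesale; your route is shorter and avoids re-checking that the middle-term estimate survives sign changes. For the lower bound, the paper handles $n=2$ with the inclusion
\[
\{X_1+X_2>x\}\supseteq \{X_1>x+h(x),\,X_2>-h(x)\}\cup\{X_2>x+h(x),\,X_1>-h(x)\}
\]
and then appeals to induction; you write down the general-$n$ Bonferroni bound directly with thresholds $x+(n-1)h(x)$ and $-h(x)$. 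Your direct treatment has the advantage that it uses (D4) only in its stated pairwise form $\P(X_i>x+h(x),\,X_j\le -h(x))=o(\oo F(x))$; the paper's induction implicitly requires the analogous estimate with $S_{n-1}$ in place of $X_i$, which is not literally (D4) and would need a short additional argument. Otherwise the two proofs rely on exactly the same ingredients: long-tailedness for the shifted thresholds, (D4) for the negative-tail error, and the $\P(X_i>x,\,X_k>x)=o(\oo F(x))$ bound from the proof of Proposition~\ref{nSum} for the overlaps.
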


\begin{proof}
The proof follows the general outline of the proof of Proposition 2.1.
The derivation of an upper bound for $\P(X_1+X_2>x)$ remains as in Proposition 2.1.  For the lower bound we have:
\begin{align*}
\P(X_1+X_2>x) \geq & \P(X_1>x+h(x),X_2>-h(x)) + \P(X_2>x+h(x), X_1>-h(x))\\
& - \P(X_1>x+h(x),X_2>x+h(x))\\
= & \P(X_1>x) + \P(X_2>x) + o(\oo{F}(x)),
\end{align*}
where we have used the long-tailedness of $X_1$ and $X_2$, condition (D4) and $\P(X_1>x+h(x),X_2>x+h(x)) = o(\oo{F}(x))$ for the same reasons as in Proposition 2.1.\\
Hence $\P(X_1+X_2>x) \sim \P(X_1>x)+\P(X_2>x)$, and the rest of the proof follows by induction.
\end{proof}

We can see that the only reason for condition (D4) is to deal with the lower bound, and hence no change is needed to show the two following generalisations of Lemma \ref{Kesten} and Proposition \ref{random sum}.

\begin{lemma}\label{Kesten2}
We let $\{X_i\}$ be as in Proposition \ref{Genrv} and satisfying (D1), (D2), (D3) and (D4).  Then, for any $\epsilon > 0$, there exist $V(\epsilon) > 0$ and $x_0=x_0(\epsilon)$ such that, for any $x > x_0$ and $n \geq 1$,
\[\P(S_n > x) \leq V(\epsilon)(1+\epsilon)^n \oo{F}(x).\]
\end{lemma}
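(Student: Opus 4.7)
The plan is to follow the proof of Lemma \ref{Kesten} essentially verbatim, since condition (D4) was introduced solely to secure the lower bound in Proposition \ref{Genrv} and plays no role in a Kesten-style upper estimate. First I would note that (D2) still yields $\oo{F}_i(x) \leq c\oo{F}(x)$ for $x > x_0$ uniformly in $i$, so the quantities
\[\alpha_k = \sup_{x > x_0} \frac{\P(S_k > x)}{\oo{F}(x)}, \qquad \alpha = \frac{1}{\oo{F}(x_0)}\]
are well defined regardless of the signs of the $X_i$ (the function $x \mapsto \P(S_k > x)$ is monotone for any real-valued random sum).

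Next I would perform the same three-way decomposition $\P(S_n > x) = P_1(x) + P_2(x) + P_3(x)$ according to $S_{n-1} \leq h(x)$, $h(x) < S_{n-1} \leq x - h(x)$, or $S_{n-1} > x - h(x)$. The only delicate point in the real-valued setting concerns $P_1$: the event $\{S_{n-1} \leq h(x)\}$ now includes arbitrarily negative values of $S_{n-1}$. However, on this event $X_n > x - S_{n-1} \geq x - h(x)$, so the same estimate $P_1(x) \leq \P(X_n > x - h(x)) \leq c L(x_0) \oo{F}(x)$ still applies, where $L(x) = \sup_{y \geq x}\oo{F}(y-h(y))/\oo{F}(y)$ tends to $1$ by long-tailedness of $F$. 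The bound $P_3(x) \leq \alpha_{n-1} L(x_0)\oo{F}(x)$ is unchanged because it uses only the definition of $\alpha_{n-1}$ and the long-tail property.

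For $P_2(x)$ I would conditon on $\GG$ and insert the splitting indicator $\1(B_n(x-y)) + \1(\oo{B}_n(x-y))$ exactly as in Lemma \ref{Kesten}. On $B_n(x-y)$ the conditional tail is bounded by $r(x-y)\oo{F}(x-y) \leq r(x)\oo{F}(x-y)$ (using monotonicity of $r$), which after taking the outer expectation replaces $\P(S_{n-1} \in dy|\GG)$ by $\P(S_{n-1} \in dy)$. The same symmetrisation trick that interchanges $S_{n-1}$ and an independent copy $Y$ with distribution $F$ then produces the familiar estimate
\[P_2(x) \leq (\alpha_{n-1} + \alpha)\!\left(r(x)\int_{h(x)}^{x-h(x)}\frac{\oo{F}(x-y)}{\oo{F}(x)}F(dy) + r(x)\oo{F}(h(x))\oo{F}(x-h(x))\right)\oo{F}(x) + \P(\oo{B}_n(h(x))),\]
whose three ingredients are driven to $o(\oo{F}(x))$ respectively by (D3iii), (D3ii), and (D3i); choosing $x_0$ so that each piece is at most $\epsilon/4 \cdot \oo{F}(x)$ gives $P_2(x) \leq \tfrac{\epsilon}{2}(\alpha_{n-1}+\alpha)\oo{F}(x) + \oo{F}(x)$ as before.

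Assembling the three bounds yields the same recursion $\alpha_n \leq R + (1+\tfrac{3}{4}\epsilon)\alpha_{n-1}$, which on iteration gives $\alpha_n \leq V(\epsilon)(1+\epsilon)^n$. I do not expect a genuine obstacle: the Kesten upper-bound machinery is completely insensitive to the signs of the summands since every step exploits only the upper tail $\{S_{n-1} > y\}$ or $\{X_n > y\}$ for $y \geq h(x)$. The only place one might worry is $P_1$, where a very negative $S_{n-1}$ could in principle be compensated by a very large $X_n$, but the trivial inclusion $\{S_{n-1} \leq h(x),\, X_n > x - S_{n-1}\} \subseteq \{X_n > x - h(x)\}$ disposes of this at once.
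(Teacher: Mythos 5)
Your proposal is correct and is essentially the paper's own argument: the paper simply remarks that (D4) is needed only for the lower bound in Proposition \ref{Genrv}, so the proof of Lemma \ref{Kesten} carries over verbatim to real-valued summands, which is exactly what you verify (the key observations being that all steps use only upper-tail events at levels at least $h(x)>0$, and that $\{S_{n-1}\leq h(x),\,X_n>x-S_{n-1}\}\subseteq\{X_n>x-h(x)\}$ handles negative values of $S_{n-1}$ in $P_1$). The only blemish is the stray factor $\oo{F}(x)$ multiplying the bracket in your displayed bound for $P_2(x)$, a typo-level slip that does not affect the argument.
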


\begin{proposition}\label{random sum2}
If, in addition to the conditions of Lemma \ref{Kesten2}, $\tau$ is an independent counting random variable such that $\E(e^{\gamma \tau})< \infty$ for some $\gamma > 0$, then 
\[\P(X_1+ \cdots + X_{\tau} > x) \sim \E\((\sum_{i=1}^{\tau} \P(X_i>x)\)).\]
\end{proposition}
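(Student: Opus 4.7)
The plan is to mirror exactly the strategy indicated for Proposition \ref{random sum}: decompose by conditioning on $\tau$, use the deterministic-sum asymptotics (now Proposition \ref{Genrv} in place of Proposition \ref{nSum}) for the pointwise limit, and use the Kesten-type bound (now Lemma \ref{Kesten2} in place of Lemma \ref{Kesten}) to dominate. The only structural change from the non-negative case is that the random variables can be of either sign, but that extra complication has already been absorbed into Proposition \ref{Genrv} and Lemma \ref{Kesten2} via condition (D4); no new ingredient is required here.

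Concretely, since $\tau$ is independent of $\{X_i\}$, I would write
\[
\frac{\P(S_\tau > x)}{\oo{F}(x)} = \sum_{n \geq 1} \P(\tau = n)\,\frac{\P(S_n > x)}{\oo{F}(x)}.
\]
For each fixed $n$, Proposition \ref{Genrv} gives $\P(S_n > x)/\oo{F}(x) \to \sum_{i=1}^n c_i$ as $x \to \infty$. To pass this limit through the sum, I would choose $\epsilon > 0$ with $\log(1+\epsilon) < \gamma$, so that $\E((1+\epsilon)^\tau) < \infty$. Lemma \ref{Kesten2} then yields, for all $x > x_0(\epsilon)$ and all $n \geq 1$, the bound $\P(S_n > x)/\oo{F}(x) \leq V(\epsilon)(1+\epsilon)^n$; the $\P(\tau = n)$-weighted sum of this bound is $V(\epsilon) \E((1+\epsilon)^\tau) < \infty$, which provides the required integrable majorant. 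Dominated convergence then delivers
\[
\lim_{x \to \infty} \frac{\P(S_\tau > x)}{\oo{F}(x)} = \sum_{n \geq 1} \P(\tau=n) \sum_{i=1}^n c_i = \E\!\left( \sum_{i=1}^\tau c_i \right).
\]

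A parallel (and simpler) application of dominated convergence handles the right-hand side of the statement: by (D2) we have $\P(X_i > x)/\oo{F}(x) \to c_i$ together with a uniform bound $\P(X_i > x)/\oo{F}(x) \leq c$, and since $\E(e^{\gamma \tau}) < \infty$ implies $\E(\tau) < \infty$, the partial sums $\sum_{i=1}^\tau \P(X_i > x)/\oo{F}(x)$ are dominated by the integrable quantity $c\tau$. Thus $\E(\sum_{i=1}^\tau \P(X_i > x))/\oo{F}(x) \to \E(\sum_{i=1}^\tau c_i)$ as well, and the two asymptotic equivalences of the proposition follow by combining the displays. No step is particularly delicate; the only point demanding a word of care is the choice of $\epsilon$ small enough to be compatible with the exponential moment of $\tau$, which is immediate from $\gamma > 0$.
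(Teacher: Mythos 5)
Your proposal is correct and follows exactly the paper's route: the paper proves this result (as for Proposition \ref{random sum}) by combining the deterministic-sum asymptotics of Proposition \ref{Genrv}, the Kesten-type bound of Lemma \ref{Kesten2}, and dominated convergence, which is precisely the decomposition and domination argument you spell out. Your write-up merely makes explicit the choice of $\epsilon$ with $(1+\epsilon) < e^{\gamma}$ and the parallel dominated-convergence step for $\E\bigl(\sum_{i=1}^{\tau}\P(X_i>x)\bigr)$, both of which are implicit in the paper.
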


To show that condition (D4) is both non-empty and necessary we construct an example where it fails to hold and the principle of the single big jump fails.

\newpage

\textbf{Example 6}\\
Consider a collection of non-negative i.i.d random variables $\{Z_i\}_{i \geq 0}$ such that each $Z_i$ has the distribution of a generic independent non-negative random variable $Z$, and $\P(Z>x)=1/x^{\alpha}$ for $x \geq 1$.  Also consider a collection, independent of the $Z_i$, of non-negative i.i.d random variables $\{Y_i\}_{i \geq 0}$ such that each $Y_i$ has the distribution of a generic independent non-negative random variable $Y$, and $\P(Y>x)=1/x^{\beta}$ for $x \geq 1$, where $\alpha>\beta>1$.  For $i \geq 1$ let $X_i=Z_i - Y_iZ_{i-1}.$

First we show that the $X_i$ satisfy conditions (D1), (D2) and (D3).

For any $i \geq 1$ we have that $\P(X_i>x) \sim \oo{F}(x) \defn 1/x^{\alpha}$ for $x \geq 1$, and we recall that the boundary class for $F$ is $\HH=\{cx;0<c<1/2\}$.  We take $\GG= \sigma(\{Y_iZ_{i-1}\}_{i \geq 1})$, and then, conditional on $\GG$, the $X_i$ are independent.  To show that condition (D3) is met we need to consider the random variables $\P(X_i>x|\GG)$, so consider
\[\P(X_i>x|Y_{i+1}Z_i=w) \leq \P(Z_i>x|Y_{i+1}Z_i=w) = \P(Z>x|YZ=w).\]
We calculate that
\begin{align*} \frac{\P(Z>x|YZ=w)}{\oo{F}(x)} & = \left\{	\begin{array}{cc}  
x^{\alpha} \((\frac{x^{-\alpha+\beta}-w^{-\alpha+\beta}}{1-w^{-\alpha+\beta}}\)) & \textrm{for  } 1<x\leq w,\\
	0 &  \textrm{for  } x>c. \end{array} \right.\\
	& \leq x^{\beta} \defn r(x).
	\end{align*}
	Clearly $r(x)\oo{F}(cx)=o(1)$ for all $0<c<1/2$.\\
Also, straightforward estimation shows that $\int_{cx}^{(1-c)x} \frac{\oo{F}(x-y)F(dy)}{\oo{F}(x)}=O(x^{-\alpha})$, and hence condition (D3iii) is met.  We take $B(x)=\Omega$ for all $x \geq 0$ so that there is nothing to show for (D3i).

We now consider condition (D4). For any $i \geq 1$, and any $H(x)$ satisfying \eqref{long-tail},
\begin{align*}
\P(X_i>x+h(x),X_{i+1}<-h(x)) = & \E\((\P(Z_1-W>x+h(x),Z_2-Y_2Z_1<-h(x)|W)\))\\
	\geq & \E \(( \P(Z_1> x + h(x) + W, Z_2<Z_1-h(x)|W)\))\\
	\geq & \E\(( \P(Z_1>x+h(x)+W, Z_2<x+W|W)\))\\
	= & \E\((\P(Z_1>x+h(x)+W)\P(Z_2<x+W|W)\)).
	\end{align*}
Hence, by Fatou's lemma,
\begin{align*}
	&\liminf_{x \to \infty} \frac{\P(X_i>x+h(x),X_{i+1}<-h(x))}{\oo{F}(x)}\\
	\geq & \liminf_{x \to \infty} \E \((\frac{\P(Z_1>x+h(x)+W|W)}{\P(Z_1>x)}\P(Z_2<x+W|W)\))\\
	\geq & \E \(( \liminf_{x \to \infty} \((\frac{\P(Z_1>x+h(x)+W|W)}{\P(Z_1>x)}\P(Z_2<x+W|W)\))\))\\
	= & 1,
\end{align*}
and so condition (D4) is not met.\\
Finally, we show that the conclusion of Proposition \ref{Genrv} fails in this example:
\begin{align*}
\P(X_1+ \ldots +X_n>x) = & \P(Z_n+(1-Y_n)Z_{n-1}+ \ldots + (1-Y_2)Z_1-Y_1Z_0>x)\\
	\leq & \P(Z>x).
\end{align*}

\section{Notations and Definitions}
\subsection{Notation}
For a random variable (r.v.) $X$ with distribution function (d.f.) $F$ we denote its tail distribution $\P(X>x) \defn \oo{F}(x)$.  For two independent r.v.'s $X$ and $Y$ with d.f.'s $F$ and $G$ we denote the convolution of $F$ and $G$ by $F*G(x) \defn \int_{-\infty}^{\infty} F(x-y)G(dy) = \P(X+Y \leq x)$, and the $n$-fold convolution $F * \cdots *F \defn F^{*n}$.  In the case of non-negative random variables we have $F*G(x) \defn \int_{0}^{x} F(x-y)G(dy) = \P(X+Y \leq x)$.

Throughout, unless stated otherwise, all limit relations are for $x \to \infty$.  Let $a(x)$ and $b(x)$ be two positive functions such that
\[l_1 \leq \liminf_{x \to \infty} \frac{a(x)}{b(x)} \leq \limsup_{x \to \infty} \frac{a(x)}{b(x)} \leq l_2.\]
We write $a(x)=O(b(x))$ if $l_2< \infty$ and $a(x)=o(b(x))$ if $l_2=0$.  We say that $a(x)$ and $b(x)$ are weakly equivalent, written $a(x) \asymp b(x)$, if both $l_1 > 0$ and $l_2<\infty$, and that $a(x)$ and $b(x)$ are (strongly) equivalent, written $a(x) \sim b(x)$, if $l_1=l_2=1$.

For any event $A \in \Omega$ we define the indicator function of event $A$ as
\[\1(A)= \left\{	\begin{array}{cc}  
1 & \textrm{if } A \textrm{ occurs},\\
	0 &  \textrm{otherwise }. \end{array} \right.\]
	
We use $X\vee Y$ to mean $\max(X,Y)$, and $X \wedge Y$ to mean $\min(X,Y)$.

\subsection{Definitions and Basic properties}
We recall some well-known results on long-tailed and subexponential distributions and their sub-classes.

A  non-negative r.v. $X$ with distribution $F$ is heavy-tailed if $\E(e^{\gamma X}) = \infty$ for all $\gamma>0$, and long-tailed if $\oo{F}(x+1) \sim \oo{F}(x)$.  The class of long-tailed distributions, $\LL$, is a proper sub-class of the heavy-tailed distributions.  The distribution $F$ is long-tailed if and only if there exists a positive function $h$, monotonically increasing to zero and satisfying $h(x)<x$ such that \begin{equation}\label{LT2}\oo{F}(x-h(x)) \sim \oo{F}(x),\end{equation}
A  r.v. $X$ with distribution $F$  is subexponential if $\oo{F}^{*2}(x) \sim 2\oo{F}(x)$.  This is equivalent to $\oo{F}^{*n}(x) \sim n\oo{F}(x)$ for any $n \geq 1$.  The class of subexponential distributions, $\SS$, is a proper sub-class of the class $\LL$ of long-tailed distributions.  If $X_1$ and $X_2$ are independent and have common d.f. $F$ then $\P(X_1+X_2>x) \sim \P(\max(X_1,X_2)>x)$.  

A  non-negative r.v. $X$ with distribution $F$ supported on the positive half-line is subexponential if and only if 
\begin{enumerate}
\item  $F$ is long-tailed;  
\item for any $h(x)<x/2$ tending monotonically to infinity,
\begin{equation}
\int_{h(x)}^{x-h(x)}\oo{F}(x-y)F(dy) = o(\oo{F}(x))
\end{equation}
\end{enumerate}

A positive function $l$ is slowly varying if, for all $\lambda > 0$, $l(\lambda x) \sim l(x)$.  A distribution function $F$ belongs to the class of regularly varying distributions of degree $\alpha$, $\RR_{-\alpha}$, if $\oo{F}(x) \sim l(x) x^{-\alpha}$ for some slowly varying function $l$.  A distribution function $F$ belongs to the class of extended regular varying distributions, ERV, if $\liminf_{x \to \infty} \frac{\oo{F}(\lambda x)}{\oo{F}(x)} \geq \lambda^{-c}$ for some $c \geq 0$ and all $\lambda \geq 1$.   A distribution function $F$ belongs to the class of intermediately regular varying distributions, IRV, also called consistent variation by some authors, if $\lim_{\lambda \downarrow 1} \liminf_{x \to \infty} \frac{\oo{F}(\lambda x)}{\oo{F}(x)} = 1$.  A distribution function $F$ belongs to the class of dominatedly regular varying distributions, $\DD$, if $\liminf_{x \to \infty} \frac{\oo{F}(\lambda x)}{\oo{F}(x)} \geq 0$ for some $\lambda > 1$.  A distribution function $F$ belongs to the class of rapidly varying distributions, $\RR_{-\infty}$, if $\lim_{x \to \infty} \frac{\oo{F}(\lambda x)}{\oo{F}(x)} = 0$ for all $\lambda \geq 1$.  We have the proper inclusions (see \cite{EKM})
\[\RR_{-\alpha} \subset \textrm{ERV} \subset \textrm{IRV} \subset \DD \cap \LL \subset \SS \subset \LL.\]

The following three lemmas are due to \cite{CS, TT, LGH} and are used in the development of Example 4 in Section 4.

\begin{lemma}\label{6.1}
The class $\RR_{- \infty}$ is closed under product convolution.
\end{lemma}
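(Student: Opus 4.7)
The plan is to use the product-convolution representation
\[
\oo{H}(\lambda x) = \int_0^\infty \oo{F}_X(\lambda x/y)\, F_Y(dy),
\]
which follows from independence of $X$ and $Y$, together with rapid variation of \emph{both} $F_X$ and $F_Y$. Throughout, let $H$ denote the distribution of $XY$, where $X,Y$ are independent non-negative random variables with $F_X,F_Y\in\mathcal{R}_{-\infty}$, and fix $\lambda>1$.

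Given $\epsilon>0$, by rapid variation of $F_X$ I would choose $z_0=z_0(\epsilon)$ so that $\oo{F}_X(\lambda z)\le\epsilon\,\oo{F}_X(z)$ for all $z\ge z_0$, and split the integral at the cutoff $y=x/z_0$. On $\{y\le x/z_0\}$ we have $x/y\ge z_0$, so the rapid-variation bound yields a contribution at most $\epsilon\int_0^{x/z_0}\oo{F}_X(x/y)\,F_Y(dy)\le\epsilon\,\oo{H}(x)$. On the complementary region, the trivial bound $\oo{F}_X(\lambda x/y)\le 1$ gives a contribution at most $\oo{F}_Y(x/z_0)$. Combining,
\[
\oo{H}(\lambda x)\le\epsilon\,\oo{H}(x)+\oo{F}_Y(x/z_0).
\]

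The main remaining task is to show that $\oo{F}_Y(x/z_0)=o(\oo{H}(x))$. For this I would use the elementary lower bound
\[
\oo{H}(x)\ge\P\bigl(X>2z_0,\,Y>x/(2z_0)\bigr)=\oo{F}_X(2z_0)\,\oo{F}_Y(x/(2z_0)),
\]
which gives
\[
\frac{\oo{F}_Y(x/z_0)}{\oo{H}(x)}\le\frac{1}{\oo{F}_X(2z_0)}\cdot\frac{\oo{F}_Y(x/z_0)}{\oo{F}_Y(x/(2z_0))}.
\]
The first factor is a positive constant (depending on $\epsilon$ only through $z_0$), and the second factor tends to $0$ as $x\to\infty$ by rapid variation of $F_Y$ applied with scaling factor $2$ (the inner argument $x/(2z_0)$ tends to infinity). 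Hence the ratio tends to $0$, yielding $\limsup_{x\to\infty}\oo{H}(\lambda x)/\oo{H}(x)\le\epsilon$, and letting $\epsilon\downarrow 0$ completes the proof.

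The key point---and what I expect would be the only genuine subtlety---is that the argument invokes rapid variation \emph{twice}: once on $F_X$ to handle the regime where $x/y$ is large, and once on $F_Y$ to show that the residual from the complementary regime is negligible. Both hypotheses $F_X,F_Y\in\mathcal{R}_{-\infty}$ are essential, and they are used in a symmetric though not simultaneous fashion.
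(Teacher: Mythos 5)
Your proof is correct. There is, however, no in-paper argument to compare it with: the paper states Lemma \ref{6.1} without proof, quoting it from the literature (\cite{CS,TT,LGH}). Your argument is a clean, self-contained substitute, and it follows the standard route for this closure property: write $\oo{H}(\lambda x)=\int_{(0,\infty)}\oo{F}_X(\lambda x/y)\,F_Y(dy)$, split at $y=x/z_0$, bound the inner region by $\epsilon\,\oo{H}(x)$ using rapid variation of $F_X$, and absorb the outer region via the elementary lower bound $\oo{H}(x)\ge\oo{F}_X(2z_0)\,\oo{F}_Y(x/(2z_0))$ together with rapid variation of $F_Y$ at scale $2$. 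Your closing remark is also on target: both hypotheses are genuinely needed, since if $F_Y$ were only, say, regularly varying, the tail of $XY$ would inherit that heavier behaviour and the conclusion would fail. Two small points are worth making explicit: (i) the mixing representation should be taken over $y\in(0,\infty)$, an atom of $Y$ at $0$ contributing nothing to $\P(XY>\lambda x)$; and (ii) you implicitly use $\oo{F}_X(2z_0)>0$ and $\oo{F}_Y(x/(2z_0))>0$, which is harmless because the ratio definition of $\RR_{-\infty}$ used in the paper presupposes tails that are positive at all finite arguments (and in the degenerate case where $\oo{F}_Y$ eventually vanishes, your residual term $\oo{F}_Y(x/z_0)$ is eventually zero, so the estimate $\oo{H}(\lambda x)\le\epsilon\,\oo{H}(x)+\oo{F}_Y(x/z_0)$ concludes the argument even more directly).
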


\begin{lemma}\label{6.2}
Let $X$ and $Y$ be two independent positive r.v.s, and let the distribution function of $X$, $F_X \in \DD \cap \LL$, and that of $Y$, $F_Y \in \RR_{- \infty}$.  Let the distribution function of $XY$ be $F_{XY}$.  Then $\oo{F}_{XY}(x) \asymp \oo{F}_X(x)$.
\end{lemma}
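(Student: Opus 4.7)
The plan is to bound $\oo{F}_{XY}(x)$ above and below by multiples of $\oo{F}_X(x)$ using the mixture representation
\[
\oo{F}_{XY}(x) \;=\; \int_0^\infty \oo{F}_X(x/y)\, dF_Y(y).
\]
The lower bound is essentially automatic: since $F_Y \in \RR_{-\infty}$ necessarily has $\oo{F}_Y(y)>0$ for every $y$, in particular $c_0 := \P(Y \geq 1)>0$, and independence together with monotonicity gives $\oo{F}_{XY}(x) \geq \P(Y \geq 1, X > x) = c_0\, \oo{F}_X(x)$, so $\liminf_{x\to\infty} \oo{F}_{XY}(x)/\oo{F}_X(x) \geq c_0$.

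For the upper bound I would first extract two quantitative consequences of the hypotheses. From $F_X \in \DD$, iterating the inequality $\oo{F}_X(\lambda x) \leq C_0\, \oo{F}_X(x)$ (valid for some $\lambda \in (0,1)$ and all large $x$) yields constants $p, C, c > 0$ and $x_0$ such that, for all sufficiently large $x$,
\[
\oo{F}_X(x/y) \;\leq\; C y^{p}\, \oo{F}_X(x) \quad (1 \leq y \leq x/x_0), \qquad \oo{F}_X(x) \;\geq\; c\, x^{-p}.
\]
From $F_Y \in \RR_{-\infty}$, the analogous iteration $\oo{F}_Y(\lambda x) \leq \epsilon\, \oo{F}_Y(x)$ (any $\lambda>1$, any $\epsilon>0$) yields super-polynomial decay of $\oo{F}_Y$; in particular $\E[Y^{q}] < \infty$ for every $q>0$, and $\oo{F}_Y(y) = o(y^{-n})$ for every $n$.

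With these tools in hand I would split the integral at $y=1$ and $y=x/x_0$ into three pieces. On $\{y \leq 1\}$ the integrand satisfies $\oo{F}_X(x/y) \leq \oo{F}_X(x)$, contributing at most $\oo{F}_X(x)$. On $\{1 < y \leq x/x_0\}$ the polynomial $\DD$-bound combined with $\E[Y^{p}]<\infty$ gives a contribution of at most $C\, \E[Y^{p}]\, \oo{F}_X(x)$. On $\{y > x/x_0\}$ the crude bound $\oo{F}_X(x/y) \leq 1$ makes the contribution at most $\oo{F}_Y(x/x_0)$, and since $\oo{F}_Y(x/x_0) = o(x^{-n})$ for every $n$ while $\oo{F}_X(x) \geq c\, x^{-p}$, this piece is $o(\oo{F}_X(x))$. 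Summing the three contributions delivers $\limsup_{x\to\infty} \oo{F}_{XY}(x)/\oo{F}_X(x) < \infty$.

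The main obstacle is the large-$y$ regime: there the ratio $\oo{F}_X(x/y)/\oo{F}_X(x)$ can be as large as $1/\oo{F}_X(x)$ and is not controlled by any polynomial bound coming from $\DD$ alone. The rapid variation of $F_Y$ is precisely what defeats this, since it forces the $F_Y$-mass beyond $x/x_0$ to decay faster than every polynomial in $x$, strictly faster than the polynomial lower bound $\oo{F}_X(x) \geq c x^{-p}$ guaranteed by $\DD$, so the troublesome region vanishes in the ratio. (I note that the $\LL$ hypothesis in $F_X \in \DD \cap \LL$ does not appear to be needed for this two-sided estimate, only the dominated-variation property.)
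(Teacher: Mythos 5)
The paper does not actually prove Lemma~\ref{6.2}: it is stated in Section~6 as a known result imported from the references \cite{CS,TT,LGH}, so there is no in-paper proof to compare against. Your argument is therefore a self-contained proof of a result the paper merely cites, and it is correct.

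Your decomposition of $\oo{F}_{XY}(x)=\int_0^\infty \oo{F}_X(x/y)\,dF_Y(y)$ into the regions $\{y\le 1\}$, $\{1<y\le x/x_0\}$ and $\{y>x/x_0\}$, with the Potter-type bound $\oo{F}_X(x/y)\le C y^{p}\oo{F}_X(x)$ and the polynomial lower bound $\oo{F}_X(x)\ge c x^{-p}$ coming from dominated variation, and the super-polynomial decay $\oo{F}_Y(y)=o(y^{-n})$ coming from rapid variation, is exactly the standard mechanism for this kind of product-tail estimate (it is essentially the structure of Cline--Samorodnitsky). The lower bound via $\P(Y\ge 1, X>x)=\P(Y\ge1)\oo{F}_X(x)$ is fine; note only that if one worried about $\P(Y\ge1)=0$, one could instead use $\P(Y\ge c)\P(X>x/c)\asymp \oo{F}_X(x)$ for a small $c>0$, invoking $\DD$ once more. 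Your observation that the $\LL$ part of the hypothesis $F_X\in\DD\cap\LL$ plays no role in this two-sided weak-equivalence estimate is also correct; only the $\DD$ property is used, and the hypothesis is stated with $\DD\cap\LL$ because that is the class needed elsewhere in Example~4 (and because the paper states the lemma in the form it is quoted from the literature). One small slip in wording: you write ``valid for some $\lambda\in(0,1)$,'' where the intent is that $\oo{F}_X(\lambda x)\le C_0\oo{F}_X(x)$ for a fixed shrinkage factor $\lambda<1$; that is the right reading and the iteration to a polynomial bound is standard, but it is worth stating it unambiguously as the definition of dominated variation ($\limsup_x \oo{F}_X(x/2)/\oo{F}_X(x)<\infty$) before iterating.
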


\begin{lemma}\label{6.3}
If $F \in \DD$ and $F_{\eta} \in \RR_{- \infty}$ then there exists $\epsilon>0$ such that $\oo{F_{\eta}}(x^{1-\epsilon}) = o(\oo{F}(x))$.
\end{lemma}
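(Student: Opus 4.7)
The plan is to exploit the polynomial bounds implicit in the two class memberships: membership in $\DD$ provides a polynomial lower bound on $\oo{F}$, while membership in $\RR_{-\infty}$ provides a super-polynomial upper bound on $\oo{F}_\eta$. Combining these makes the result immediate.

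First, I would show that $F \in \DD$ yields constants $c_1, \beta > 0$ with $\oo{F}(x) \geq c_1 x^{-\beta}$ for all sufficiently large $x$. The (standardly interpreted) condition says that $\oo{F}(x)/\oo{F}(2x) \leq C$ for some $C < \infty$ and all $x \geq x_0$. Iterating this bound gives $\oo{F}(2^n x_0) \geq C^{-n}\oo{F}(x_0)$, and for arbitrary $x \geq x_0$ we locate $n$ with $2^n x_0 \leq x < 2^{n+1}x_0$ and use monotonicity of $\oo{F}$ to obtain $\oo{F}(x) \geq c_1 x^{-\beta}$ with $\beta = \log_2 C$. This is the standard Matuszewska-index style argument for dominated variation.

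Next, I would show that $F_\eta \in \RR_{-\infty}$ gives, for every $q > 0$, a constant $C_q < \infty$ with $\oo{F}_\eta(x) \leq C_q x^{-q}$ for all large $x$. Specializing the rapid variation definition to $\lambda = 2$ and choosing $\delta = 2^{-q-1}$, there exists $x_1$ such that $\oo{F}_\eta(2x) \leq \delta\, \oo{F}_\eta(x)$ for $x \geq x_1$. Iterating and interpolating via monotonicity exactly as above produces $\oo{F}_\eta(x) \leq C_q x^{-q}$ for large $x$.

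Finally, I would combine the two bounds. Fix any $\epsilon \in (0,1)$ and choose $q > \beta/(1-\epsilon)$; then for all sufficiently large $x$,
\[
\frac{\oo{F}_\eta(x^{1-\epsilon})}{\oo{F}(x)} \;\leq\; \frac{C_q\, x^{-q(1-\epsilon)}}{c_1\, x^{-\beta}} \;=\; \frac{C_q}{c_1}\, x^{\,\beta - q(1-\epsilon)} \;\longrightarrow\; 0,
\]
which delivers $\oo{F}_\eta(x^{1-\epsilon}) = o(\oo{F}(x))$ as required. The only technical point requiring care is the derivation of the polynomial lower bound for $F \in \DD$, since the definition as printed in the paper (with $\geq 0$) is vacuous and must be read in its standard form $\liminf_{x\to\infty} \oo{F}(\lambda x)/\oo{F}(x) > 0$; granting this, the rest is bookkeeping.
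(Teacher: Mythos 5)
Your proof is correct. Note, however, that the paper itself does not prove Lemma 6.3 at all: it is stated as one of three auxiliary lemmas attributed to the cited works of Cline--Samorodnitsky, Tang--Tsitsiashvili and Laeven--Goovaerts--Hoedemakers, so there is no in-paper argument to compare against. What you supply is the standard self-contained route: a Matuszewska-index-type iteration giving a polynomial lower bound $\oo{F}(x)\geq c_1x^{-\beta}$ from dominated variation, a dual iteration giving $\oo{F}_\eta(x)\leq C_q x^{-q}$ for every $q$ from rapid variation, and then the elementary comparison $x^{\beta-q(1-\epsilon)}\to 0$ once $q>\beta/(1-\epsilon)$; in fact you prove the stronger statement that the conclusion holds for \emph{every} $\epsilon\in(0,1)$, not merely for some $\epsilon$. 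Two small points of care, both of which you essentially handle: the paper's printed definition of $\DD$ (with ``$\geq 0$'') is indeed a typo and must be read as $\liminf_{x\to\infty}\oo{F}(\lambda x)/\oo{F}(x)>0$, as you say; and that definition only guarantees the bound for \emph{some} $\lambda>1$ rather than for $\lambda=2$, so strictly you should either run the same iteration with that $\lambda$ (which changes nothing except $\beta=\log_\lambda C$) or first note that the property for one $\lambda>1$ implies it for all $\lambda>1$ by iteration and monotonicity. With that reading, your argument is complete and gives the reader something the paper only outsources to the references.
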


\vspace{0.5cm}

\bfseries\large{Acknowledgment}

\mdseries\normalsize
The authors are grateful to Stan Zachary for many fruitful and insightful conversations.  They are also grateful to the associate editor and anonymous referees for many helpful and perceptive comments which have greatly improved the paper.

\end{document}